\documentclass[12pt,a4paper]{amsart}
\usepackage{amssymb}
\usepackage{amsmath}
\usepackage{xspace}
\usepackage[english]{babel}
\selectlanguage{english}
\usepackage[latin1]{inputenc}
\usepackage[all,2cell]{xy}
%%%%%%%%%%%%%%%%%%%%%%%%%%%%%%%%%%%
% Macros
   % signatura (S,Sigma)
             % familia A
   % algebra (A,F)
   % ecuacion (S,Sigma,E)

             % sistema de clausura
              % operador clausura
              % categoria
            % cardinal
\newcommand{\at}[1]%
            {\ensuremath{\protect\underline{\mathbf{#1}}}} % algebraic theory
              % monada
              % adjuncion
             % familia de ecuaciones

            % Clausura
\newcommand{\op}[1]{\ensuremath{\operatorname{#1}}}        % Operador
      % Operador grande
                      % tupla <a,b,c> <a|b>
% Grothendieck Universe
                                  % farg. el argumento de una funcion. alt:{\_}

                        % \section{Math accents}

                                % diamante
                           % triangleleft
                          % triangleright
                                % partial d

                                                           % $S$-palabra $S$\nbd conjunto  era:\newcommand{\s}[1]{$S$\nbd}
\newcommand{\h}[1][]                                       % \h-conjunto $\Omega^{\h}$
 {\ifthenelse{\boolean{mmode}}%
  {$\mathrm{h}$}%
  {h\nobreakdash#1\hspace{0pt}}}

                                        %\section{Arrows}

                 % true
                % false
                                         % neg negacion
                % con (and definido)
                  % dis (or definido)
            % con grande
              % dis grande
           % imp -->
     % coimp <->
               % necesidad
              % posibilidad

                % \ded A |- a
              % \ded A |/- a
               % \rded a -| A

           % \nmodels |/=
               % \cons consecuencia semantica

                                         % \brule para 2

 % diagramas serialmente conmutativos

   % Numeros naturales
   % Universo de Grothendieck
      % Objetos de una categoria Ob(C)
    % Morfismos de una categoria Mor(C)
    % funcion dominio
    % funcion codominio
        % id(entidad)
    % Homomorfismos Hom_{C}(A,B)
    % Endomorfismos End_{C}(A)
    % Automorfismos Aut_{C}(A)
    % ExponenciaciÛn
                                   % \subsection{Composicion}
\newcommand{\comp}{\circ}          % composicion
      % diamond composition
          % composiciÛn horizontal de trans. nat.
         % composiciÛn vertical de trans. nat.
\newcommand{\adcomp}%
  {\overset{\operatorname{ad}}{\comp}} % Ad-composicion cuadrados adjuntos
\newcommand{\funcomp}%
  {\overset{\operatorname{fn}}{\comp}}% Fun-composiciÛn cuadrados adjuntos
  % Tilde composition
 % Tilde horizontal composition

      % Id Functor identidad
    % Arr categoria de flechas
    % Fun(A,B) categoria functores y t.n. de A en B
    % Nat(F,G) t.n. de F en G
      % In Functor inclusion
                                   % Pr Functor proyeccion
     % C^{op} categoria dual
 % si f:A->B en C, \dl{f}:B->A en C^{op}

\newcommand{\sccat}
{\mathbin{\kern-1pt\raisebox{6pt}{.}\kern-5pt
\downarrow\kern-5pt\raisebox{6pt}{.}\kern-1pt}}

\newcommand{\parrow}[1]%           % Projective arrow
   {\underset{{\displaystyle \raisebox{5pt}%
   {$\longleftarrow$}}}{\op{#1}}{\,}}
\newcommand{\iarrow}[1]%           % Inductive arrow
   {\underset{{\displaystyle \raisebox{5pt}%
   {$\longrightarrow$}}}{\op{#1}}{\,}}

    % Conos (con flecha)
  % Coconos (con flecha)

     % Limites con flecha
    % Limites con flecha

     % Igualador
     % igualador flecha
  % Coigualador
  % coigualador flecha
      % pullbach producto fibrado (fibered product)
      % pushout suma amalgamada (amalgamated sum)

         % ladj A -I B
         % radj A l- B
           % uadj T
           % dadj _I_
    % f^{\lad}
    % f^{\rad}
    % adj

    % Functor shape
  % wreath product (producto orlado)
      % funcion St(ructuras) de los constructos
      % funcion Ad(misibles) de los constructos
    % Con(K) CategorÌas concretas sobre K
% Struct(K) CategorÌas de K-objetos con estructura

    %

    % Functor Set
  % Functor Diag
      % Kleisli
      % Eilenberg-Moore

    % Functor Sig
    % Funtor Thr teorias algebraicas de Benabou
    % Funtor Thr teorias algebraicas
     % Presentaciones de teorias

     % CategorÌa conjugada
     % CategorÌa transpuesta
     % CategorÌa simetrica
     % transformaciones
    % algebraico
    % Lax squares. lax cuadrados
    % pseudo squares. pseuado cuadrados.
    % inverso
    % Subobjeto (algebra, conjunto) Sub (A)
      % caracter de una funcion
    % Relaciones Rel(A,B)
      % Hom_{\ND} aplicaciones no deterministas
    % Funciones Fnc(A,B)
    % Eqv(ivalencias)  Equivalencia
      % Eqv(ivalencias)  Equivalencia generada
 % Grafo de una aplicacion
     % Imagen de una aplicaciÛn (Im definido)
     % inclusion (in definido)
  % soporte
      % emptyset conjunto vacÌo
 % lmapsto
\newcommand{\rest}%
{\mathnormal{\restriction}}        % restriction

\newcommand{\nin}{\not\in}         % no pertenece
\newcommand{\incl}{\subseteq}      % inclusion A <= B
    % no inclusion A </= B
       % inclusion estricta A < B
     % reverse inclusion B > A
      % reverse strict inclusion B > A
     % text prod
          % mini prod
 % text coprod
    % mini coprod
       % text sum
     % script prod
 % script coprod
       % script sum
     % scriptscript prod
 % scriptscript coprod
       % scriptscript sum
\newcommand{\bprod}{\times}        % binary product (prod)
       % heterogeneous product
      % binary coproduct (coprod)
      % heterogeneous product
       % family intersection
         % binary intersection
       % family union
         % binary union
           % isomorfo
  % cardinal card(A)
      % Sub_{f}(X)
    % Sub_{lf}(X)
\newcommand{\function}[4]{
            \begin{array}{@{\:}c@{\:}c@{\:}l}
                   #1 &\mor& #2 \\
                   #3 &\longmapsto& #4
            \end{array} }
\newcommand{\nfunction}[4]
    {\left\{
     \function{#1}{#2}{#3}{#4}
     \right. }

\DeclareMathOperator{\ard}{ar}     % ar(iedad)
    % car (coariedad)
   % biariedad
  % Functor monoide libre
\newcommand{\fmon}[1]
 {\ensuremath{#1^{\star}}}

\newcommand{\bb}[1]{\ensuremath
 {\lvert #1 \rvert}}
      % Closed subset of an algebra. Cerrados de un algebra.
\DeclareMathOperator{\Sg}{Sg}      % Sg subalgebra generada
\DeclareMathOperator{\E}{E}        % Operador E generacion de subalgebras
    % Congruencia
      % Congruencia generada
        %
    % Congruencia cogenerada
 % Congruencia totalemente invariante (fully invariant)
 % Congruencia totalemente invariante (fully invariant)
    % Kernel: n˙cleo de un aplicaciÛn
        % proyecciÛn canÛnica de A en A/Phi
    % para operaciones generalizadas
      % Op_{w}(_A_) Algebra op. w-arias
     % O(_A_)=(Opw(A)s| w,s in ÖÖ)
    % Pol_{w}(_A_) Op. polinomicas  w-arias
      % Pd Polinomio determinado

    % Alg_{w}(_A_) Op. algebraicas w-arias
        % Words Algebra de las palabras
      % Free Algebra libre
        % Free para teorias, monadas, .. ..
        % Ground functor olvido
        % Inclusion functor inclusion
        % Inclusion functor inclusion
        % p: |w --> A ^ Aw
\DeclareMathOperator{\bconcat}
            {\curlywedge}

        % becuadro

    % subterminos de un tÈrmino.
    % variables de un termino. var(P)
    % Terminos: Fr E (X)s X en Set^S, s en S
        % Fr E (w)s w en S* s, en S
      % Eq(_) Ecuaciones  sobre una signatura
                                   % \models |=
    % Qeq(_) Quasiecuaciones  sobre una signatura

        % Base de Dedekind-Peano
      % evaluaciÛn ev: A^B x B -> A
    % Variedad Var(\sg{\Sigma}) conjunto de las variedades
      % TeorÌa generada por una clase de algebras
    % Clase de algebras determinada por ecuaciones
      % Operador clausura asociado a Th o Mod
      % Operador clausura asociado a Mod o Th
 % ecuaciones Hall
  % ecuaciones Lawvere
      % condiciones Dedekind-Peano
    % EspecificaciÛn de Monoide.
    % derivors
    % fujiwara
    % Sentencias
    % Categoria de Monadas.
      % Functor de Ad en Mnd.
      % choosen proyections. Proyecciones especificadas.

\newcommand{\vacio}{\ensuremath{\varnothing}}

\newcommand{\brel}{\ensuremath{\xymatrix{{}\ar@{{*}{-}{*}}[r] & {}}}}

\newcommand{\nseq}[3]{\xymatrix@1@C=16pt{#1 \ar@{>}[r]_-{\scriptscriptstyle{#2}} & #3 }}

\NoCompileMatrices              % compila los diagramas \NoCompileMatrices
     % tamaÒo de las decor(ations)
   % tamaÒo para twocell feature
\xymatrixrowsep = {8ex}         % o 40pt
\xymatrixcolsep = {10ex}        % o 50pt

\newdir{<= }{:a(+25)\dir{-}*:a(-25)\dir{-}*!/:a(85)+1.8pt/:a(-25)\dir{-}}
\newdir{ = }{*@{=}}
\newdir{+>}{@{|}*@{-}!/-6.5pt/@{>}*!/-13pt/{}}
\newdir{ +}{{}*!/-5pt/@{+}}
\newdir{ >}{{}*!/-10pt/@{>}}
\newdir{.-}{{}*:a(-90)h!/2.1pt/{.}}
\newdir{.->}{:a(-90)h!/2.5pt/{.}*!/4pt/\dir{-}*!/0pt/\dir{-}*%
!/-2pt/\dir{-}*!/-8pt/\dir{>}}
\newdir{-.->}%
{:a(-90)h!/2pt/{.}*!/8pt/\dir{-}*!/4pt/\dir{-}*!/0pt/\dir{-}*%
!/-5pt/\dir{-}*!/-11pt/\dir{>}}
\newdir{~>}{!/4.6pt/\dir{~}*!/1pt/\dir{-}*!/-5pt/\dir{>}}
\newdir{~~>}{!/7pt/\dir{~}*!/0pt/\dir{~}*!/-5pt/\dir{>}}
\newdir{=~>}{*!/1pt/@2{~}*!/6pt/{}*!/-6pt/@2{>}}
\newdir{<~~>}{!/21pt/\dir{<}*!/21pt/\dir{-}!/12pt/\dir{~}!/5pt/\dir{~}*!/1pt/\dir{-}*!/-3pt/\dir{>}}
\newdir{=>}{!/5pt/\dir{=}!/2.5pt/\dir{=}*!/-5pt/\dir2{>}*!/7pt/\dir{ }}
\newdir{==>}{!/-2pt/\dir{=}!/-6pt/\dir{=}%
 !/-10pt/\dir{=}*!/-18pt/\dir2{>}}
\newdir{<==>}{!\dir2{<}!/-2pt/\dir{=}!/-6pt/\dir{=}%
 !/-10pt/\dir{=}*!/-17pt/\dir2{>}}
\newdir{< }{{}*!/12pt/\dir{<}}
\newdir{-o}{!/1.3pt/{}*!/0pt/{}@{o}*!/-5pt/{}}

\newsavebox{\xymor}  % --->
\newsavebox{\xymon}  % +-->
\newsavebox{\xyepi}  % --+>
\newsavebox{\xytn}   % -.->
\newsavebox{\xyrel}  % ---o
\newsavebox{\xycel}  % ===>
\newsavebox{\xymdf}  % ~~~>
\newsavebox{\xyumor} % ---`
\newsavebox{\xydmor} % ---,
\newsavebox{\xyomor} % ---<
\newsavebox{\xyemor} % >--->

\newcommand{\xynode}{\makebox[0ex]{}}
\savebox{\xymor}{\ensuremath{%
\xymatrix@1@C=19pt{\xynode \ar@{>}[r] & \xynode }}}
\savebox{\xymon}{\ensuremath{%
\xymatrix@1@C=19pt{\xynode \ar@{{ +}{-}{>}}[r] & \xynode }}}
\savebox{\xyepi}{\ensuremath{%
\xymatrix@1@C=19pt{\xynode \ar@{{}{-}{+>}}[r] & \xynode }}}
\savebox{\xytn}{\ensuremath{%
\xymatrix@1@C=19pt{\xynode \ar[r]|(.44){\object@{.-}} & \xynode
}}}
\savebox{\xyrel}{\ensuremath{%
\xymatrix@1@C=19pt{\xynode \ar@{{}{-}{-o}}[r] & \xynode }}}
\savebox{\xycel}{\ensuremath{%
\xymatrix@1@C=19pt{\xynode \ar@{=>}[r] & \xynode }}}
\savebox{\xymdf}{\ensuremath{%
\xymatrix@1@C=16pt{\xynode \ar@{}[r]|{\dir{~>}} & \xynode}}}
\savebox{\xyumor}{\ensuremath{%
\xymatrix@1@C=19pt{\xynode \ar@{{}{-}^{>}}[r] & \xynode }}}
\savebox{\xydmor}{\ensuremath{%
\xymatrix@1@C=19pt{\xynode \ar@{{}{-}_{>}}[r] & \xynode }}}
\savebox{\xyomor}{\ensuremath{%
\xymatrix@1@C=19pt{\xynode \ar@{{}{-}^{< }}[r] & \xynode }}}
\savebox{\xyemor}{\ensuremath{%
\xymatrix@1@C=19pt{\xynode \ar@{{ >}{-}{>}}[r] & \xynode }}}

\newcommand{\mor}{\usebox{\xymor}}    % mor  --->
    % mon  +-->
    % epi  --+>
      % tn   -.->
    % rel  ---o
    % cel  ===>
    % mdf  ~~~>
  % umor ---`
  % dmor ---,
  % omor ---<
  % emor >--->

\newcommand{\functor}[9]{
 \xymatrix{
    #4 \save[]+<0ex,5ex>*+{#1}="1"  \restore
      \ar[d]_{#6}  \ar@{}[rd]|{\longmapsto}
  & #5 \save[]+<0ex,5ex>*+{#3}="3"  \restore
      \ar[d]^{#7}
  \\
   #8 & #9 \ar "1";"3"^-{#2} } }
\newcommand{\functornd}[9]{
 \xymatrix{
    #4 \save[]+<0ex,5ex>*+{#1}="1"  \restore
      \ar[d]_{#6}  \ar@{}[rd]|{\longmapsto}
  & #5 \save[]+<0ex,5ex>*+{#3}="3"  \restore
  \\
   #8 & #9 \ar[u]_{#7} \ar "1";"3"^-{#2} } }
\newcommand{\functordn}[9]{
 \xymatrix{
    #4 \save[]+<0ex,5ex>*+{#1}="1"  \restore
       \ar@{}[rd]|{\longmapsto}
  & #5 \save[]+<0ex,5ex>*+{#3}="3"  \restore
      \ar[d]^{#7}
  \\
   #8  \ar[u]^{#6}  & #9 \ar "1";"3"^-{#2} } }

\newcommand{\larr}{->}
\newcommand{\rarr}{->}

\newcommand{\xfunctor}[9]{
 \xymatrix{
    #4 \save[]+<0ex,5ex>*+{#1}="1"  \restore
      \ifthenelse{\equal{\larr}{->}}{\ar[d]_{#6}}{}
      \ifthenelse{\equal{\larr}{<-}}{\ar[d];[]^{#6}}{}
      \ifthenelse{\equal{\larr}{-<}}{\ar@{< }[d]_{#6}}{}
      \ar@{}[rd]|{\longmapsto}
  & #5 \save[]+<0ex,5ex>*+{#3}="3"  \restore
      \ifthenelse{\equal{\rarr}{->}}{\ar[d]^{#7}}{}
      \ifthenelse{\equal{\rarr}{<-}}{\ar[d];[]_{#7}}{}
      \ifthenelse{\equal{\rarr}{-<}}{\ar@{< }[d]^{#7}}{}
  \\
   #8 & #9 \ar "1";"3"^-{#2} } }

% End of macros

\UseAllTwocells
%%%%%%%%%%%%%%%%%%%%%%%
% To typeset a document double spaced, include the command:
%\renewcommand{\baselinestretch}{2}

\theoremstyle{plain}
\newtheorem{theorem}{\indent\bf Theorem}[section]
\newtheorem{proposition}[theorem]{\indent\bf Proposition}
\newtheorem{corollary}[theorem]{\indent\bf Corollary}

\theoremstyle{definition}
\newtheorem{definition}[theorem]{\indent\bf Definition}

\newtheorem*{remark}{\indent\bf Remark}

\theoremstyle{remark}

\numberwithin{equation}{section}

%%%%%%%%%%%%%%%%%%%%%%%%%%%%%%%%%%%%%%%%%%%%%%%%%%%%%%%%%%%%
% Narrow
% Environment copiado de Using Imported Graphics in   , pag.62.
% Permite estrechar o agrandar los limites de la p·gina.

% Ejemplo. Amplia el espacio para el texto 10ex a izquierda y derecha

% \begin{narrow}{-10ex}{-10ex}
%    xxxxx xxxxx xxxxx xxxxx xxxxx xxxxx
% \end{narrow}
%
% end Narrow
%%%%%%%%%%%%%%%%%%%%%%%%%%%%%%%%%%%%%%%%%%%%%%%%%%%%%%%%%%%%

\begin{document}
%% Two authors
\title[$n$-ary M-s closure operators and the m-s IrB theorem]{A characterization of the $n$-ary many-sorted closure operators and a many-sorted Tarski irredundant basis theorem}

\author[Climent]{J. Climent Vidal}
\address{Universitat de Val\`{e}ncia\\
         Departament de L\`{o}gica i Filosofia de la Ci\`{e}ncia\\
         Av. Blasco Ib\'{a}\~{n}ez, 30-$7^{\mathrm{a}}$, 46010 Val\`{e}ncia, Spain}
\email{Juan.B.Climent@uv.es}
%\thanks{The research of the first author}
\author[Cosme]{E. Cosme Ll\'{o}pez}
\address{Universitat de Val\`{e}ncia\\
         Departament d'\`{A}lgebra\\
         Dr. Moliner, 50, 46100 Burjassot, Val\`{e}ncia, Spain}
\email{Enric.Cosme@uv.es}
%\thanks{The research of the second author}

\subjclass[2010]{Primary: 06A15; Secondary: 54A05.} \keywords{$S$-sorted set, delta of Kronecker, support of an $S$-sorted set, $n$-ary many-sorted closure operator, uniform many-sorted closure operator, irredundant basis for a many-sorted closure space.}
\date{May 5th, 2016}

\begin{abstract}
A theorem of single-sorted algebra states that, for a closure space $(A,J)$ and a natural number $n$, the closure operator $J$ on the set $A$ is $n$-ary if, and only if, there exists a single-sorted signature $\Sigma$ and a $\Sigma$-algebra $\mathbf{A}$ such that every operation of $\mathbf{A}$ is of an arity $\leq n$ and $J = \mathrm{Sg}_{\mathbf{A}}$, where $\mathrm{Sg}_{\mathbf{A}}$ is the subalgebra generating operator on $A$ determined by  $\mathbf{A}$. On the other hand, a theorem of Tarski asserts that if $J$ is an $n$-ary closure operator on a set $A$ with $n\geq 2$, and if $i<j$ with $i$, $j\in \mathrm{IrB}(A,J)$, where $\mathrm{IrB}(A,J)$ is the set of all natural numbers $n$ such that $(A,J)$ has an irredundant basis ($\equiv$ minimal generating set) of $n$ elements, such that $\{i+1,\ldots, j-1\}\cap \mathrm{IrB}(A,J) = \varnothing$, then $j-i\leq n-1$. In this article we state and prove the many-sorted counterparts of the above theorems. But, we remark, regarding the first one under an additional condition: the uniformity of the many-sorted closure operator.
\end{abstract}
\maketitle

%\tableofcontents

%%%%%%%%%%%%%%%%%%%%%%%%%%%%%%%%%%%%%%%%%%%%%%%%%%%%%%%%%%%%

\section{Introduction.}\hfill

A well-known theorem of single-sorted algebra states that, for a closure space $(A,J)$ and a natural number $n\in \mathbb{N} = \omega$, the closure operator $J$ on the set $A$ is $n$-ary if, and only if, there exists a single-sorted signature $\Sigma$ and a $\Sigma$-algebra $\mathbf{A}$ such that every operation of $\mathbf{A}$ is of an arity $\leq n$ and $J = \mathrm{Sg}_{\mathbf{A}}$, where $\mathrm{Sg}_{\mathbf{A}}$ is the subalgebra generating operator on $A$ determined by  $\mathbf{A}$. On the other hand, in~\cite{cs04}, it was stated that, for an algebraic many-sorted closure operator $J$ on an $S$-sorted set $A$, $J = \mathrm{Sg}_{\mathbf{A}}$ for some many-sorted signature $\Sigma$ and some $\Sigma$-algebra $\mathbf{A}$ if, and only if, $J$ is uniform. And, by using, among others, the just mentioned result, our first main result is the following characterization of the $n$-ary many-sorted closure operators: Let $S$ be a set of sorts, $A$ an $S$-sorted set, $J$ a  many-sorted closure operator on $A$, and $n\in \mathbb{N}$. Then $J$ is $n$-ary and uniform if, and only if, there exists an $S$-sorted signature $\Sigma$ and a $\Sigma$-algebra $\mathbf{A}$ such that $J = \mathrm{Sg}_{\mathbf{A}}$ and every operation of $\mathbf{A}$ is of an arity $\leq n$.

We turn next to Tarski's irredundant basis theorem for single-sorted closure spaces. But before doing that let us begin by recalling the terminology relevant to the case. Given an $n$ in $\mathbb{N}$, a set $A$, and a closure operator $J$ on $A$, the closure operator $J$ is said to be an $n$-ary closure operator on $A$ if $J = J^{\omega}_{\leq n}$, where $J^{\omega}_{\leq n}$ is the supremum of the family $(J^{m}_{\leq n})_{m\in\omega}$ of operators on $A$ defined by recursion as follows:  for $m = 0$, $J^{0}_{\leq n} = \mathrm{Id}_{\mathrm{Sub}(A)}$; for $m = k+1$, with $k\geq 0$, $J^{k+1}_{\leq n}(X) = J_{\leq n}\circ J^{k}_{\leq n}$, where $J_{\leq n}$ is the operator on $A$ defined, for every $X\subseteq A$, as follows:
$$
\textstyle
J_{\leq n}(X) = \bigcup\{J(Y)\mid Y\in\mathrm{Sub}_{\leq n}(X)\},
$$
where $\mathrm{Sub}_{\leq n}(X)$ is $\{Y\subseteq X\mid \mathrm{card}(Y)\leq n\}$.
%Therefore $J^{\omega}_{\leq n} = \bigvee_{m\in\omega}J^{m}_{\leq n}$ is such that, for every $X\subseteq A$, $J^{\omega}_{\leq n}(X) = \bigcup_{m\in\omega}J^{m}_{\leq n}(X)$.

Alfred Tarski in~\cite{at75}, on pp.~190--191, proved, as reformulated by S. Burris and H. P. Sankappanavar in~\cite{bs81}, on pp.~33--34, the following theorem. Given a set $A$ and an $n$-ary closure operator $J$ on $A$ with $n\geq 2$, for every $i$, $j\in \mathrm{IrB}(A,J)$, where $\mathrm{IrB}(A,J)$ is the set of all natural numbers $n$ such that $(A,J)$ has an irredundant basis($\equiv$ minimal generating set) of $n$ elements, if $i<j$ and $\{i+1,\ldots, j-1\}\cap \mathrm{IrB}(A,J) = \varnothing$, then $j-i\leq n-1$. Thus, as stated by Burris and Sankappanavar in~\cite{bs81}, on p.~33, the length of the finite gaps in $\mathrm{IrB}(A,J)$ is bounded by $n-2$ if $J$ is an $n$-ary closure operator. And our second main result is the proof of Tarski's irredundant basis theorem for many-sorted closure spaces.

\section{Many-sorted sets, many-sorted closure operators, and many-sorted algebras.}\hfill

In this section, for a set of sorts $S$ in a fixed Grothendieck universe $\boldsymbol{\mathcal{U}}$, we begin by recalling some basic notions of the theory of $S$-sorted sets, e.g., those of subset of an $S$-sorted set, of proper subset of an $S$-sorted set, of delta of Kronecker, of cardinal of an $S$-sorted set, and of support of an S-sorted set; and by defining, for an $S$-sorted set $A$, the concepts of many-sorted closure operator on $A$ and of many-sorted closure space. Moreover, for a many-sorted closure operator $J$ on $A$, we define the notions of irredundant or independent part of $A$ with respect to $J$, of basis or generator of $A$ with respect to $J$, of irredundant basis of $A$ with respect to $J$, and of minimal basis of $A$ with respect to $J$. In addition, we state that the notion of irredundant basis of $A$ with respect to $J$ is equivalent to the notion of minimal basis of $A$ with respect to $J$ and, afterwards, for a many-sorted closure space $(A,J)$, we define the subset $\mathrm{IrB}(A,J)$ of $\mathbb{N}$ as being formed by choosing those natural numbers which are the cardinal of an irredundant basis of $A$ with respect to $J$. On the other hand, for a natural number $n$, we define the concept of $n$-ary many-sorted closure operator on $A$ and provide a characterization of the $n$-ary many-sorted closure operators $J$ on $A$, in terms of the fixed points of $J$. Besides, for a set of sorts $S$, we define the concept of $S$-sorted signature, and, for an $S$-sorted signature $\Sigma$, the notion of $\Sigma$-algebra and, for a $\Sigma$-algebra $\mathbf{A}$, the concept of subalgebra of  $\mathbf{A}$ and the subalgebra generating many-sorted operator $\mathrm{Sg}_{\mathbf{A}}$ on $A$ determined by  $\mathbf{A}$. Subsequently, once defined the notion of finitely generated $\Sigma$-algebra, we state that, for a finitely generated $\Sigma$-algebra $\mathbf{A}$, $\mathrm{IrB}(A,\mathrm{Sg}_{\mathbf{A}})\neq \varnothing$.

\begin{definition}
An $S$-\emph{sorted set} is a function $A = (A_{s})_{s\in S}$ from $S$ to $\boldsymbol{\mathcal{U}}$.
\end{definition}

\begin{definition}
Let $S$ be a set of sorts. If $A$ and $B$ are $S$-sorted sets, then we will say that $A$ is a \emph{subset} of $B$, denoted by $A\subseteq B$, if, for every $s\in S$, $A_{s}\subseteq B_{s}$, and that $A$ is a \emph{proper} subset of $B$, denoted by $A\subset B$, if $A\subseteq B$ and, for some $s\in S$, $B_{s}-A_{s}\neq \varnothing$. We denote by $\mathrm{Sub}(A)$ the set of all $S$-sorted sets $X$ such that $X\subseteq A$.
\end{definition}

\begin{definition}
Given a sort $t\in S$ and a set $X$ we call \emph{delta of Kronecker for} $(t,X)$ the $S$-sorted set $\delta^{t,X}$ defined, for every $s\in S$, as follows:
\begin{equation*}
\delta^{t,X}_{s} =
\begin{cases}
X, &\text{if $s = t$;}\\
\vacio, & \text{otherwise.}
\end{cases}
\end{equation*}
For a final set $\{x\}$, to abbreviate, we will write $\delta^{t,x}$ instead of the more accurate $\delta^{t,\{x\}}$.
%In particular, if the final set is the standard set $1 = \{0\}$, then we will write $\delta^{t,1}$ or $\delta^{t}$. Therefore, given a sort $t\in S$, the \emph{delta of Kronecker for} $(t,1)$ is the $S$-sorted set $\delta^{t} = (\delta^{t}_{s})_{s\in S}$ defined, for every $s\in S$, as follows:
%\begin{equation*}
%\delta^{t}_{s} =
%\begin{cases}
%1, &\text{if $s = t$;}\\
%\vacio, & \text{otherwise.}
%\end{cases}
%\end{equation*}
\end{definition}

We next define, for a set of sorts $S$, the concept of cardinal of an $S$-sorted set, for an $S$-sorted set $A$, the notion of support of $A$, and characterize the finite $S$-sorted sets in terms of its supports.

\begin{definition}
Let $A$ be an $S$-sorted set. Then the \emph{cardinal of} $A$, denoted by $\mathrm{card}(A)$, is the cardinal of $\coprod A$, where $\coprod A$, the coproduct of $A = (A_{s})_{s\in S}$, is $\bigcup_{s\in S}(A_{s}\times\{s\})$. Moreover,  $\mathrm{Sub}_{\mathrm{fin}}(A)$ denotes the set of all finite subsets of $A$, i.e., the set $\{X\subseteq A \mid \textstyle\mathrm{card}(X)< \aleph_{0}\}$, and, for a natural number $n$, $\mathrm{Sub}_{\leq n}(A)$ denotes the set of all subsets of $A$ with at most $n$ elements, i.e., the set $\{X\subseteq A \mid \textstyle\mathrm{card}(X)\leq n\}$. Sometimes, for simplicity of notation, we write $X\subseteq_{\mathrm{fin}}A$ instead of $X\in \mathrm{Sub}_{\mathrm{fin}}(A)$.
\end{definition}

\begin{definition}
Let $S$ be a set of sorts. Then the \emph{support of} $A$, denoted by $\mathrm{supp}_{S}(A)$, is the set $\{\,s\in S\mid A_{s}\neq \varnothing\,\}$.
\end{definition}

\begin{proposition}
An $S$-sorted set $A$ is finite if, and only if, $\mathrm{supp}_{S}(A)$ is finite and, for every $s\in \mathrm{supp}_{S}(A)$, $\mathrm{card}(A_{s})<\aleph_{0}$.
\end{proposition}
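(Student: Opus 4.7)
The plan is to unpack the definition $\mathrm{card}(A) = \mathrm{card}(\coprod A)$ and use elementary cardinal arithmetic on the disjoint union $\coprod A = \bigcup_{s \in S}(A_s \times \{s\})$. Observe first that the summands with $A_s = \varnothing$ contribute nothing, so
\[
\textstyle\coprod A \;=\; \bigcup_{s \in \mathrm{supp}_S(A)}(A_s \times \{s\}),
\]
which already makes the two conditions look like the natural ingredients of a finiteness criterion.

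For the forward implication, I would assume $\mathrm{card}(\coprod A) < \aleph_0$ and exhibit two maps. The second projection $\pi\colon \coprod A \to S$, $(a,s) \mapsto s$, has image exactly $\mathrm{supp}_S(A)$, so $\mathrm{supp}_S(A)$ is a surjective image of a finite set and hence finite. For each $s \in \mathrm{supp}_S(A)$, the insertion $\iota_s\colon A_s \to \coprod A$, $a \mapsto (a,s)$, is injective, so $\mathrm{card}(A_s) \leq \mathrm{card}(\coprod A) < \aleph_0$.

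For the converse, assume $\mathrm{supp}_S(A)$ is finite and $\mathrm{card}(A_s) < \aleph_0$ for each $s \in \mathrm{supp}_S(A)$. Using the displayed equality above, $\coprod A$ is a finite union (indexed by $\mathrm{supp}_S(A)$) of the sets $A_s \times \{s\}$, each of which has cardinal $\mathrm{card}(A_s) < \aleph_0$; a finite union of finite sets is finite, so $\mathrm{card}(A) < \aleph_0$.

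No step here is an obstacle: the whole argument is just the observation that a disjoint union is finite exactly when the index set of non-empty summands is finite and each such summand is finite. I would keep the write-up short, displaying the rewriting of $\coprod A$ once and then handling the two implications in a sentence each.
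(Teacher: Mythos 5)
Your argument is correct, and it is the evident one: the paper states this proposition without proof, treating it as an immediate consequence of the definition $\mathrm{card}(A)=\mathrm{card}(\coprod A)$ with $\coprod A=\bigcup_{s\in S}(A_{s}\times\{s\})$. Your two maps (the projection onto $S$ with image $\mathrm{supp}_{S}(A)$ and the injections $A_{s}\to\coprod A$) together with the finite-union-of-finite-sets observation settle both directions exactly as intended.
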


\begin{definition}
Let $S$ be a set of sorts and $A$ an $S$-sorted set. A \emph{many-sorted closure operator on} $A$ is a mapping $J$ from $\mathrm{Sub}(A)$ to $\mathrm{Sub}(A)$, which assigns to every $X\subseteq A$ its $J$-\emph{closure} $J(X)$, such that, for every $X,Y\subseteq A$, satisfies the following conditions:
\begin{enumerate}
\item $X\subseteq J(X)$, i.e., $J$ is extensive.

\item If $X\subseteq Y$, then $J(X)\subseteq J(Y)$, i.e., $J$ is isotone.

\item $J(J(X)) = J(X)$, i.e., $J$ is idempotent.
\end{enumerate}
Given two many-sorted closure operators $J$ and $K$ on $A$, $J$ is called \emph{smaller than} $K$, denoted by $J\leq K$, if, for every $X\subseteq A$, $J(A)\subseteq K(A)$. A \emph{many-sorted closure space} is an ordered pair $(A,J)$ where $A$ is an $S$-sorted set and $J$ a many-sorted closure operator on $A$. Moreover, if $X\subseteq A$, then $X$ is \emph{irredundant} (or \emph{independent}) \emph{with respect to} $J$ if, for every $s\in S$ and every $x\in X_{s}$, $x\nin J(X-\delta^{s,x})_{s}$, $X$ is a \emph{basis} (or a \emph{generator}) \emph{with respect to} $J$ if $J(X) = A$, $X$ is an \emph{irredundant basis with respect to} $J$ if $X$ irreduntant and a basis with respect to $J$, and $X$ is a \emph{minimal basis with respect to} $J$ if $J(X) = A$ and, for every $Y\subset X$, $J(Y)\neq A$.
\end{definition}

%\begin{remark}
%Let $A$ be an $S$-sorted set and $J\colon \mathrm{Sub}(A)\mor \mathrm{Sub}(A)$. Then $J$ is a many-sorted closure operator on $A$ if, and only if, for every $X$, $Y\subseteq A$, $X\subseteq J(Y)$ if, and only if, $J(X)\subseteq J(Y)$.
%\end{remark}

%\begin{remark}
%For an $S$-sorted set $A$, the set of all many-sorted closure operator on $A$, ordered by the relation $\leq$, is a complete lattice.
%\end{remark}

We next state that the notion of irredundant basis of $A$ with respect to $J$ is equivalent to the notion of minimal basis of $A$ with respect to $J$. Moreover, for a many-sorted closure space $(A,J)$, we define $\mathrm{IrB}(A,J)$ as the intersection of the  set of all natural numbers and the set of the cardinals of the irredundant basis of $A$ with respect to $J$.

\begin{proposition}
Let $(A,J)$ be a many-sorted closure space and $X\subseteq A$. Then $X$ is an irredundant basis with respect to $J$ if, and only if,  it is a minimal basis with respect to $J$.
\end{proposition}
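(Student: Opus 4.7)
The plan is to prove the two implications separately by the standard contrapositive arguments, adapted carefully to the many-sorted notation, where the key point is understanding how the operation $X - \delta^{s,x}$ removes a single element $x$ from the $s$-component of $X$ while leaving all other components untouched.

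For the forward implication, I would suppose that $X$ is an irredundant basis and, reasoning by contradiction, assume there exists $Y \subset X$ with $J(Y) = A$. By the definition of proper inclusion of $S$-sorted sets, there exist $s \in S$ and $x \in X_{s}$ with $x \notin Y_{s}$. Then $Y \subseteq X - \delta^{s,x}$, so by isotonicity $J(Y) \subseteq J(X - \delta^{s,x})$, whence $J(X - \delta^{s,x}) = A$. In particular $x \in A_{s} = J(X - \delta^{s,x})_{s}$, contradicting irredundancy at $(s,x)$.

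For the converse, I would suppose that $X$ is a minimal basis but not irredundant, so that there exist $s \in S$ and $x \in X_{s}$ with $x \in J(X - \delta^{s,x})_{s}$. Setting $Y = X - \delta^{s,x}$, one has $Y \subset X$ since $x \in X_{s} \setminus Y_{s}$. The key computation is then to show $J(Y) = A$: since $Y \subseteq J(Y)$ by extensivity and $x \in J(Y)_{s}$ by hypothesis, we get $\delta^{s,x} \subseteq J(Y)$, so $X = Y \cup \delta^{s,x} \subseteq J(Y)$. By isotonicity and idempotence, $A = J(X) \subseteq J(J(Y)) = J(Y)$, which contradicts the minimality of $X$.

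I do not expect any real obstacle here; the whole argument is a routine translation of the single-sorted proof, once one has written down the sortwise definitions of $\subset$, of $\delta^{s,x}$, and of irredundance. The only point requiring mild care is that ``irredundance at $(s,x)$'' refers to the $s$-component of $J(X - \delta^{s,x})$ rather than to $J(X - \delta^{s,x})$ as a whole, which is exactly what makes the two equivalences glue together in each direction.
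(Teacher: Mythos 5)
Your proof is correct: both contrapositive arguments go through exactly as written, and the sortwise handling of $X-\delta^{s,x}$ and of proper inclusion is the right level of care. The paper states this proposition without giving a proof, and your argument is precisely the standard one it implicitly relies on, so there is nothing to compare beyond noting agreement.
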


%\begin{proof}
%Let us suppose that $X$ is an irredundant basis with respect to $J$, i.e., that $J(X) = A$ and that, for every $s\in S$ and every $x\in X_{s}$, $x\nin J(X-\delta^{s,x})_{s}$. We want to show that $J(X) = A$ and that, for every $Y\subset X$, $J(Y)\neq A$. Obviously, from the hypothesis, we have that $J(X) = A$. Now let $Y$ be a proper subset of $X$. Then, for every $s\in S$, $Y_{s}\subseteq X_{s}$ and there exists an $s_{0}\in S$ such that $X_{s_{0}}-Y_{s_{0}}\neq\varnothing$. Let $x_{0}$ be an element of $X_{s_{0}}-Y_{s_{0}}$. Then $x_{0}\nin J(Y)_{s_{0}}$, since otherwise, i.e., if $x_{0}\in J(Y)_{s_{0}}$, then, because $Y\subseteq X-\delta^{s_{0},x_{0}}$, $x_{0}\in J(X-\delta^{s_{0},x_{0}})_{s_{0}}$, contradiction. Therefore $J(Y)\neq A$.
%
%Reciprocally, let us suppose that $J(X) = A$ while $J(Y)\neq A$, for every $Y\subset X$. We want to show that $J(X) = A$ and that $X$ is irredundant. Obviously, from the hypothesis, we have that $J(X) = A$. Now let us suppose that there exists an $s_{0}\in S$ and an $x_{0}\in X_{s_{0}}$ such that $x_{0}\in J(X-\delta^{s_{0},x_{0}})_{s_{0}}$. Then, for $Y = X-\delta^{s_{0},x_{0}}$, we have that $Y\subset X$ and $J(Y) = A$, which is a contradiction. Therefore, for every $s\in S$ and every $x\in X_{s}$, $x\nin J(X-\delta^{s,x})_{s}$.
%\end{proof}

\begin{definition}
Let $S$ be a set of sorts and $(A,J)$ a many-sorted closure space. Then we denote by $\mathrm{IrB}(A,J)$ the subset of $\mathbb{N}$ defined as follows:
$$
\mathrm{IrB}(A,J) = \mathbb{N}\cap \biggl\{ \mathrm{card}(X)\biggm|
\begin{gathered}
X \text{ is an irredundant basis }
\\[-3pt]
\text{ of } A \text{ with respect to } J
\end{gathered}
\biggr\}.
$$
\end{definition}

Later, in this section, after having defined, for a set of sorts $S$ and an $S$-sorted signature $\Sigma$, the concept of $\Sigma$-algebra, for a $\Sigma$-algebra $\mathbf{A} = (A,F)$, the uniform algebraic many-sorted closure operator $\mathrm{Sg}_{\mathbf{A}}$ on $A$, called the subalgebra generating many-sorted operator on $A$ determined by $\mathbf{A}$, and the notion of finitely generated $\Sigma$-algebra, we will state that, for a finitely generated $\Sigma$-algebra $\mathbf{A}$,  $\mathrm{IrB}(A,\mathrm{Sg}_{\mathbf{A}})\neq \varnothing$.

\begin{definition}
Let $A$ be an $S$-sorted set, $J$ a many-sorted closure operator on $A$, and $n$ a natural number.
\begin{enumerate}
\item We denote by $J_{\leq n}$ the many-sorted operator on $A$ defined, for every $X\subseteq A$, as follows:
      $$
       \textstyle
       J_{\leq n}(X) = \bigcup\{J(Y)\mid Y\in\mathrm{Sub}_{\leq n}(X)\}.
      $$

\item We define the family $(J^{m}_{\leq n})_{m\in \mathbb{N}}$ of many-sorted operator on $A$, recursively, as follows:
      \begin{equation*}
      J^{m}_{\leq n} =
      \begin{cases}
      \mathrm{Id}_{\mathrm{Sub}(A)}, & \text{if $m = 0$;}\\
      J_{\leq n}\circ J^{k}_{\leq n},
      & \text{if $m = k+1$, with $k\geq 0$.}
      \end{cases}
      \end{equation*}
\item We denote by $J^{\omega}_{\leq n}$ the many-sorted operator on
      $A$ that assigns to an $S$-sorted subset $X$ of
      $A$, $J^{\omega}_{\leq n}(X) = \bigcup_{m\in \mathbb{N}}J^{m}_{\leq n}(X)$.
\item We say that $J$ is $n$-\emph{ary} if $J = J^{\omega}_{\leq n}$.
\end{enumerate}
\end{definition}

\begin{remark}
Let $J$ be a many-sorted closure operator on $A$. Then $J$ is $0$-ary, i.e., $J = J^{\omega}_{\leq 0}$, if, and only if, for every $X\subseteq A$, we have that
$$
 J(X) = X\cup J(\varnothing^{S}),
$$
where $\varnothing^{S}$ is the $S$-sorted set whose $s$th coordinate, for every $s\in S$, is $\varnothing$.

We next prove that $J$ is $1$-ary, i.e., that $J = J^{\omega}_{\leq 1}$, if and only if, for every $X\subseteq A$, we have that
$$
 J(X) = J(\varnothing^{S})\cup \textstyle \bigcup_{s\in S, x\in
 X_{s}}J(\delta^{s,x}).
$$
Let us suppose that, for every $X\subseteq A$, $J(X) = J(\varnothing^{S})\cup \textstyle \bigcup_{s\in S, x\in X_{s}}J(\delta^{s,x})$. Then it is obvious that, for every $X\subseteq A$, $J(X)\subseteq J_{\leq 1}(X)$. Let us verify that, for every $X\subseteq A$, $J_{\leq 1}(X) = \bigcup\{J(Y)\mid Y\in\mathrm{Sub}_{\leq 1}(X)\}\subseteq J(X)$. Let $Y$ be an element of $\mathrm{Sub}_{\leq 1}(X)$. Then $Y = \varnothing^{S}$ or $Y = \delta^{t,a}$, for some $t\in S$ and some $a\in X_{t}$. If $Y = \varnothing^{S}$, then
$$
J(\varnothing^{S})\subseteq J(\varnothing^{S})\cup \textstyle \bigcup_{s\in S, x\in
X_{s}}J(\delta^{s,x}) = J(X).
$$
If $Y = \delta^{t,a}$, then $J(\delta^{t,a})\subseteq \bigcup_{s\in S, x\in
X_{s}}J(\delta^{s,x})$, hence
$$
J(\delta^{t,a})\subseteq J(\varnothing^{S})\cup \textstyle \bigcup_{s\in S, x\in
X_{s}}J(\delta^{s,x}) = J(X).
$$
Thus $J_{\leq 1}(X)\subseteq J(\varnothing^{S})\cup \textstyle \bigcup_{s\in S, x\in
X_{s}}J(\delta^{s,x}) = J(X)$. Therefore $J = J_{\leq 1}$. Hence, for every $m\geq 1$, $J = J^{m}_{\leq 1}$. Consequently $J$ is $1$-ary.

Reciprocally, let us suppose that $J$ is $1$-ary, i.e., that, for every $X\subseteq A$, $J(X) = \bigcup_{m\in \mathbb{N}}J^{m}_{\leq 1}(X)$. Then, obviously, we have that
$$
J(X) \supseteq J(\varnothing^{S})\cup \textstyle \bigcup_{s\in S, x\in
 X_{s}}J(\delta^{s,x}).
$$
Let us verify that, for every $m\in \mathbb{N}$, $J(\varnothing^{S})\cup \bigcup_{s\in S, x\in X_{s}}J(\delta^{s,x})\supseteq J^{m}_{\leq 1}$. Evidently $J(\varnothing^{S})\cup \bigcup_{s\in S, x\in X_{s}}J(\delta^{s,x})\supseteq J^{0}_{\leq 1}(X)\cup J^{1}_{\leq 1}(X)$. Let $k$ be $\geq 1$ and let us suppose that $J(\varnothing^{S})\cup \bigcup_{s\in S, x\in X_{s}}J(\delta^{s,x})\supseteq J^{k}_{\leq 1}(X)$. We will show that $J(\varnothing^{S})\cup \bigcup_{s\in S, x\in X_{s}}J(\delta^{s,x})\supseteq J^{k+1}_{\leq 1}(X)$. By definition we have that
$$
J^{k+1}_{\leq 1}(X) = J_{\leq 1}(J^{k}_{\leq 1}(X)) = \textstyle \bigcup\{J(Z)\mid Z\in \mathrm{Sub}_{\leq 1}(J^{k}_{\leq 1}(X))\}.
$$
Let $Z$ be an element of $\mathrm{Sub}_{\leq 1}(J^{k}_{\leq 1}(X))$. Then $Z\subseteq J^{k}_{\leq 1}(X)$. But we have that $J^{k}_{\leq 1}(X) = \bigcup\{J(Y)\mid Y\in \mathrm{Sub}_{\leq 1}(J^{k-1}_{\leq 1}(X))\}$. Therefore, for some $Y\in \mathrm{Sub}_{\leq 1}(J^{k-1}_{\leq 1}(X))$, $Z\subseteq J(Y)$. Thus $J(Z)\subseteq J(J(Y)) = J(Y)$. But $J(Y)\subseteq J^{k}_{\leq 1}(X)$. Consequently $J(Z)\subseteq J^{k}_{\leq 1}(X)$. Whence, by the induction hypothesis, $J(Z)\subseteq J(\varnothing^{S})\cup \bigcup_{s\in S, x\in X_{s}}J(\delta^{s,x})$. From this, since $Z$ was an arbitrary element of $\mathrm{Sub}_{\leq 1}(J^{k}_{\leq 1}(X))$, we infer that
$$
J^{k+1}_{\leq 1}(X) = \textstyle \bigcup\{J(Z)\mid Z\in \mathrm{Sub}_{\leq 1}(J^{k}_{\leq 1}(X))\}\subseteq J(\varnothing^{S})\cup \bigcup_{s\in S, x\in X_{s}}J(\delta^{s,x}).
$$
Thus, for every $X\subseteq A$, we have that
$$
 J(X) = J(\varnothing^{S})\cup \textstyle \bigcup_{s\in S, x\in
 X_{s}}J(\delta^{s,x}).
$$
\end{remark}

\begin{remark}
Let $n$ be $\geq 1$, $A$ an $S$-sorted set, $X\subseteq A$, and $J$ a many-sorted closure operator on $A$. Then, for every $k\geq 0$ and every $Y\subseteq A$, if $Y\in \mathrm{Sub}_{\leq n}(J^{k}_{\leq n}(X))$, then $Y\in \mathrm{Sub}_{\leq n}(J^{k+1}_{\leq n}(X))$.
\end{remark}

We next state, for a natural number $n\geq 1$ and a many-sorted closure operator $J$ on an $S$-sorted set $A$, that the family of many-sorted operators $(J^{m}_{\leq n})_{m\in \mathbb{N}}$ on $A$ is an ascending chain and that $J^{\omega}_{\leq n}$, which is the supremum of the above family, is the greatest $n$-ary many-sorted closure operator on $A$ which is smaller than  $J$.

\begin{proposition}
For a natural number $n\geq 1$, an $S$-sorted set $A$, and a many-sorted closure operator $J$ on $A$, the family of many-sorted operators $(J^{m}_{\leq n})_{m\in \mathbb{N}}$ on $A$ is an ascending chain, i.e., for every $m\in \mathbb{N}$, $J^{m}_{\leq n}\leq J^{m+1}_{\leq n}$. Moreover, $J^{\omega}_{\leq n}$ is the greatest $n$-ary many-sorted closure operator on $A$ such that $J^{\omega}_{\leq n}\leq J$.
\end{proposition}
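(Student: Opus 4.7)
The plan is to dispatch the ascending-chain claim first and then decompose the assertion that $K:=J^{\omega}_{\leq n}$ is the greatest $n$-ary closure operator below $J$ into four subclaims: $K$ is a closure operator, $K$ is $n$-ary, $K\leq J$, and any $n$-ary $K'\leq J$ satisfies $K'\leq K$.

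For the chain I would argue by induction on $m$ that $J^{m}_{\leq n}(X)\subseteq J^{m+1}_{\leq n}(X)$ for every $X\subseteq A$. The base case $X\subseteq J_{\leq n}(X)$ is the only place the hypothesis $n\geq 1$ matters: for each $s\in S$ and $x\in X_{s}$, $\delta^{s,x}\in\mathrm{Sub}_{\leq n}(X)$, and extensivity of $J$ places $x$ in $J(\delta^{s,x})_{s}\subseteq J_{\leq n}(X)_{s}$. The inductive step drops out from the obvious isotonicity of $J_{\leq n}$ in its input, since enlarging $X$ can only enlarge the family $\mathrm{Sub}_{\leq n}(X)$.

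Writing $K=J^{\omega}_{\leq n}$, extensivity and isotonicity of $K$ are routine. The crux of the second half is idempotence, which I would extract from the key lemma $J_{\leq n}(K(X))\subseteq K(X)$: any $Y\in\mathrm{Sub}_{\leq n}(K(X))$ has $\mathrm{card}(Y)\leq n<\aleph_{0}$ and is therefore finite, so by the chain property already proved and $K(X)=\bigcup_{m}J^{m}_{\leq n}(X)$ there exists $k$ with $Y\subseteq J^{k}_{\leq n}(X)$, giving $J(Y)\subseteq J^{k+1}_{\leq n}(X)\subseteq K(X)$. Iterating yields $J^{m}_{\leq n}(K(X))\subseteq K(X)$ for every $m$ and hence $K(K(X))=K(X)$. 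This finite-subset extraction is the pivotal step of the whole argument and the main reason the chain property had to come first. For $n$-arity, rather than re-invoking the chain argument for $K$, I would prove simultaneously by induction that $J^{m}_{\leq n}(X)\subseteq K^{m}_{\leq n}(X)$ and $K^{m}_{\leq n}(X)\subseteq K(X)$: the first uses that $J(Y)\subseteq K(Y)$ whenever $\mathrm{card}(Y)\leq n$ (because then $Y\in\mathrm{Sub}_{\leq n}(Y)$, so $J(Y)\subseteq J_{\leq n}(Y)\subseteq K(Y)$), while the second uses $K_{\leq n}(Z)\subseteq K(Z)$ for every $Z$ (immediate from isotonicity of $K$) together with idempotence of $K$. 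Taken together these inclusions give $K=K^{\omega}_{\leq n}$.

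Finally, $K\leq J$ is an easy induction on $m$ showing $J^{m}_{\leq n}(X)\subseteq J(X)$: for $Y\subseteq J(X)$ with $\mathrm{card}(Y)\leq n$ one has $J(Y)\subseteq J(J(X))=J(X)$. Maximality is the parallel induction: given any $n$-ary $K'\leq J$, I would prove $(K')^{m}_{\leq n}(X)\subseteq J^{m}_{\leq n}(X)$ by transporting subsets via the inductive hypothesis and transporting values via $K'\leq J$, and taking unions in $m$ together with the $n$-arity of $K'$ then yields $K'(X)=(K')^{\omega}_{\leq n}(X)\subseteq K(X)$.
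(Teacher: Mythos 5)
Your proof is correct and complete. The paper actually states this proposition without proof, so there is nothing to compare against line by line; but your argument is the natural one and every step checks out: the base case $X\subseteq J_{\leq n}(X)$ via the Kronecker deltas is precisely where $n\geq 1$ enters, and the pivotal idempotence step --- extracting, for a $Y$ with $\mathrm{card}(Y)\leq n<\aleph_{0}$, a single index $k$ with $Y\subseteq J^{k}_{\leq n}(X)$ from the ascending chain --- is exactly the finite-support device the paper itself deploys later (in Proposition \ref{$n$-aryalg} and in the characterization of $n$-ary operators via fixed points). Your four-part decomposition of ``greatest $n$-ary closure operator below $J$'' (closure operator, $n$-ary, $\leq J$, maximal among such) covers everything the statement asserts, and the two parallel inductions $J^{m}_{\leq n}(X)\subseteq K^{m}_{\leq n}(X)\subseteq K(X)$ and $(K')^{m}_{\leq n}(X)\subseteq J^{m}_{\leq n}(X)$ are sound, the former resting correctly on the observation that $J(Y)\subseteq K(Y)$ for any $Y$ with $\mathrm{card}(Y)\leq n$. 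No gaps.
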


We next provide a characterization of the $n$-ary many-sorted closure operators $J$ on an $S$-sorted set $A$ in terms of the fixed points $X$ of $J$ and of its relationships with the $J$-closures of the subsets of $X$ with at most $n$ elements.

\begin{proposition}
Let $A$ be an $S$-sorted set, $J$ a many-sorted closure operator on $A$, and $n$ a natural number. Then $J$ is $n$-ary if, and only if, for every $X\subseteq A$, if, for every $Z\in \mathrm{Sub}_{\leq n}(X)$, $J(Z)\subseteq X$, then $J(X) = X$ (i.e., if, and only if, for every $X\subseteq A$, $X$ is a fixed point of $J$ if $X$ contains the $J$-closure of each of its subsets with at most $n$ elements).
\end{proposition}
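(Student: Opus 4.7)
I would prove the two implications separately, the first by a direct induction and the second by locating, for each finite ``test set'' $Z$, a single stage of the chain $(J^{m}_{\leq n})_{m\in\mathbb{N}}$ that already contains $Z$.

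\emph{Necessity.} Assume $J=J^{\omega}_{\leq n}$ and let $X\subseteq A$ satisfy $J(Z)\subseteq X$ for every $Z\in \mathrm{Sub}_{\leq n}(X)$. I would show by induction on $m$ that $J^{m}_{\leq n}(X)\subseteq X$. The base case $m=0$ is $\mathrm{Id}_{\mathrm{Sub}(A)}(X)=X$. For the step, if $J^{k}_{\leq n}(X)\subseteq X$, then any $Y\in \mathrm{Sub}_{\leq n}(J^{k}_{\leq n}(X))$ also lies in $\mathrm{Sub}_{\leq n}(X)$, so the standing hypothesis on $X$ yields $J(Y)\subseteq X$; taking the union of all such $J(Y)$ gives $J^{k+1}_{\leq n}(X)\subseteq X$. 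Passing to the union over $m$ produces $J(X)=J^{\omega}_{\leq n}(X)\subseteq X$, and extensivity supplies the reverse inclusion, whence $J(X)=X$.

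\emph{Sufficiency.} Assume the fixed-point condition and suppose first that $n\geq 1$. The inequality $J^{\omega}_{\leq n}\leq J$ from the previous proposition already yields $J^{\omega}_{\leq n}(X)\subseteq J(X)$ for every $X\subseteq A$, so it suffices to prove the opposite inclusion. Fix $X$, set $Y=J^{\omega}_{\leq n}(X)$, and I would apply the hypothesis to $Y$ itself: if $J(Z)\subseteq Y$ for every $Z\in \mathrm{Sub}_{\leq n}(Y)$, then $J(Y)=Y$, and since $X\subseteq Y$ isotony gives $J(X)\subseteq J(Y)=Y$ as required. To verify the premise, given $Z\in \mathrm{Sub}_{\leq n}(Y)$, I would use the finiteness of $Z$ together with the ascending-chain property of $(J^{m}_{\leq n})_{m\in\mathbb{N}}$: each of the (finitely many, at most $n$) pairs $(s,z)$ with $z\in Z_{s}$ lies in some $J^{m_{(s,z)}}_{\leq n}(X)$, and for $m^{\ast}$ the maximum of these indices one has $Z\subseteq J^{m^{\ast}}_{\leq n}(X)$, hence
\[
J(Z)\subseteq J_{\leq n}(J^{m^{\ast}}_{\leq n}(X))=J^{m^{\ast}+1}_{\leq n}(X)\subseteq Y.
\]

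\emph{Where the work sits, and the $n=0$ edge case.} The induction in the necessity direction is routine; the load-bearing step is the concentration argument in sufficiency, where the ``$n$-ary'' hypothesis really enters through the fact that $\mathrm{card}(Z)\leq n$ counts elements over all sorts simultaneously, so the relevant maximum is taken over finitely many indices. The case $n=0$ falls outside the ascending-chain lemma, but is immediate from the earlier remark: since $\mathrm{Sub}_{\leq 0}(X)=\{\varnothing^{S}\}$ for every $X$, the hypothesis reduces to ``$J(\varnothing^{S})\subseteq X$ implies $J(X)=X$'', which, together with the identity $J^{\omega}_{\leq 0}(X)=X\cup J(\varnothing^{S})$, closes both implications at once.
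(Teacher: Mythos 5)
Your proposal is correct and follows essentially the same route as the paper: the necessity direction by induction on $m$ showing $J^{m}_{\leq n}(X)\subseteq X$, and the sufficiency direction by applying the fixed-point hypothesis to $Y=J^{\omega}_{\leq n}(X)$ and using finiteness of $Z$ together with the ascending-chain property to locate a single stage $J^{m^{\ast}}_{\leq n}(X)$ containing $Z$. Your explicit treatment of the $n=0$ case via $J^{\omega}_{\leq 0}(X)=X\cup J(\varnothing^{S})$ is in fact more detailed than the paper, which merely declares that case obvious.
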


\begin{proof}
If $n = 0$, then the result is obvious. So let us consider the case when $n\geq 1$.
Let us suppose that $J$ is $n$-ary and let $X$ be a subset of $A$ such that, for every $Z\in \mathrm{Sub}_{\leq n}(X)$, $J(Z)\subseteq X$. We want to show that $J(X) = X$. Because $J$ is extensive, $X\subseteq J(X)$. Therefore it only remains to show that $J(X)\subseteq X$. Since, by hypothesis, $J(X) = \bigcup_{m\in \mathbb{N}}J^{m}_{\leq n}(X)$, to show that $J(X)\subseteq X$ it suffices to prove that, for every $m\in \mathbb{N}$, $J^{m}_{\leq n}(X)\subseteq X$.

For $m = 0$ we have that $J^{0}_{\leq n}(X) = X\subseteq X$.

Let us suppose that, for $k\geq 0$, $J^{k}_{\leq n}(X)\subseteq X$. Then we want to show that $J^{k+1}_{\leq n}(X)\subseteq X$. But, by definition, we have that
$$
\textstyle
J^{k+1}_{\leq n}(X) = J_{\leq n}(J^{k}_{\leq n}(X)) = \bigcup\{J(Y)\mid Y\in \mathrm{Sub}_{\leq n}(J^{k}_{\leq n}(X))\}.
$$
Hence what we have to prove is that, for every $Y\in \mathrm{Sub}_{\leq n}(J^{k}_{\leq n}(X))$, $J(Y)\subseteq X$. Let $Y$ be a subset of $J^{k}_{\leq n}(X)$ such that $\mathrm{card}(Y)\leq n$. Since $J^{k}_{\leq n}(X)\subseteq X$, we have that $Y\subseteq X$ and $\mathrm{card}(Y)\leq n$, therefore $J(Y)\subseteq X$. Consequently, for every $X\subseteq A$, if, for every $Z\in \mathrm{Sub}_{\leq n}(X)$, $J(Z)\subseteq X$, then $J(X) = X$.

Reciprocally, let us suppose that, for every $X\subseteq A$, if, for every $Z\in \mathrm{Sub}_{\leq n}(X)$, $J(Z)\subseteq X$, then $J(X) = X$. We want to show that $J$ is $n$-ary, i.e., that $J = J^{\omega}_{\leq n}$. Let $X$ a subset of $A$. Then it is obvious that $J^{\omega}_{\leq n}(X) = \bigcup_{m\in \mathbb{N}}J^{m}_{\leq n}(X)\subseteq J(X)$. We now proceed to prove that $J(X)\subseteq J^{\omega}_{\leq n}(X)$. Since $J$ is isotone and, by the definition of $J^{\omega}_{\leq n}$, $X\subseteq J^{\omega}_{\leq n}(X)$, we have that $J(X)\subseteq J(J^{\omega}_{\leq n}(X))$. Therefore to prove that $J(X)\subseteq J^{\omega}_{\leq n}(X)$ it suffices to prove that $J(J^{\omega}_{\leq n}(X)) = J^{\omega}_{\leq n}(X)$. But the just stated equation follows from the supposition because, as we will prove next, for every $Z\in \mathrm{Sub}_{\leq n}(J^{\omega}_{\leq n}(X))$, we have that $J(Z)\subseteq J^{\omega}_{\leq n}(X)$. Let $Z$ be a subset of $J^{\omega}_{\leq n}(X)$ such that $\mathrm{card}(Z)\leq n$. Then, for some $\ell\in \mathbb{N}$, $\mathrm{supp}_{S}(Z) = \{s_{0},\ldots,s_{\ell-1}\}$ and, for every $\alpha\in \ell$, there exists an $n_{\alpha}\in \mathbb{N}-1$ such that $Z_{s_{\alpha}} = \{z_{\alpha,0},\ldots,z_{\alpha,n_{\alpha}-1}\}$. Therefore, for every $\alpha\in \ell$ and every $\beta\in n_{\alpha}$ there exists an $m_{\alpha,\beta}\in \mathbb{N}$ such that that $z_{\alpha,\beta}\in J^{m_{\alpha,\beta}}_{\leq n}(X)_{s_{\alpha}}$. Since it may be helpful for the sake of understanding, let us represent the situation just described by the following figure:
$$
%\left(
\begin{matrix}
z_{0,0}\in J^{m_{0,0}}_{\leq n}(X)_{s_{0}} &\dots & z_{0,n_{0}-1}\in J^{m_{0,n_{0}-1}}_{\leq n}(X)_{s_{0}}\\
\vdots &\ddots & \vdots \\
z_{\ell-1,0}\in J^{m_{\ell-1,0}}_{\leq n}(X)_{s_{\ell-1}} &\dots & z_{\ell-1,n_{\ell-1}-1}\in J^{m_{\ell-1,n_{\ell-1}-1}}_{\leq n}(X)_{s_{\ell-1}}
\end{matrix}
%\right)
$$
Hence, for every $\alpha\in \ell$ there exists a $\beta_{\alpha}\in n_{\alpha}$ such that $Z_{s_{\alpha}}\subseteq J^{m_{\alpha,\beta_{\alpha}}}_{\leq n}(X)_{s_{\alpha}}$. On the other hand, since the family of many-sorted operators $(J^{m}_{\leq n})_{m\in \mathbb{N}}$ on $A$ is an ascending chain, there exists an $m$ in the set $\{m_{\alpha,\beta_{\alpha}}\mid \alpha\in \ell\}$ such that, for every $\alpha\in \ell$, $J^{m_{\alpha,\beta_{\alpha}}}_{\leq n}\leq J^{m}_{\leq n}$. Thus $Z\subseteq J^{m}_{\leq n}(X)$. Therefore, since, in addition, $\mathrm{card}(Z)\leq n$, we have that $Z\in \mathrm{Sub}_{\leq n}(J^{m}_{\leq n}(X))$. Thus
$$
\textstyle
J(Z)\subseteq J^{m+1}_{\leq n}(X) = J_{\leq n}(J^{m}_{\leq n}(X)) = \bigcup\{J(K)\mid K\in \mathrm{Sub}_{\leq n}(J^{m}_{\leq n}(X))\}.
$$
Consequently $J(Z)\subseteq J^{\omega}_{\leq n}(X)$. Hence $J(X)\subseteq J^{\omega}_{\leq n}(X)$. Whence $J = J^{\omega}_{\leq n}$, which completes the proof.
\end{proof}

We next recall the notion of free monoid on a set and, for a set of sorts $S$, we define, by using the the just mentioned notion, the concept of $S$-sorted signature and, for an $S$-sorted signature $\Sigma$, the concept of $\Sigma$-algebra.

\begin{definition}
Let $S$ be a set of sorts. The \emph{free monoid on} $S$, denoted by $\mathbf{S}^{\star}$, is $(S^{\star},\curlywedge,\lambda)$, where $S^{\star}$, the set of all \emph{words on} $S$, is $\bigcup_{n\in\mathbb{N}}\mathrm{Hom}(n,S)$, the set of all mappings $w\colon n\mor S$ from some $n\in \mathbb{N}$ to $S$, $\curlywedge$, the \emph{concatenation} of words on $S$, is the binary operation on $S^{\star}$ which sends a pair of words $(w,v)$ on $S$ to the mapping $w\curlywedge v$ from $\bb{w}+\bb{v}$ to $S$, where $\bb{w}$ and $\bb{v}$ are the lengths ($\equiv$ domains) of the mappings $w$ and $v$, respectively, defined as follows:
$$
w\bconcat v
\nfunction
{\bb{w}+\bb{v}}{S}
{i}{
\begin{cases}
w_{i}, & \text{if $0\leq i < \bb{w}$;}\\
v_{i-\bb{w}}, & \text{if $\bb{w}\leq i < \bb{w}+\bb{v}$,}
\end{cases}
}
$$
and $\lambda$, the \emph{empty word on} $S$, is the unique mapping $\lambda\colon\varnothing\mor S$.
\end{definition}

\begin{definition}
Let $S$ be a set of sorts. Then an $S$-\emph{sorted signature} is a function $\Sigma$ from $\fmon{S}\bprod S$ to $\boldsymbol{\mathcal{U}}$ which sends a pair $(w,s)\in \fmon{S}\bprod S$ to the set $\Sigma_{w,s}$ of the \emph{formal operations} of \emph{arity} $w$, \emph{sort} (or \emph{coarity}) $s$, and \emph{rank} (or \emph{biarity}) $(w,s)$.
\end{definition}

\begin{definition}
Let $\Sigma$ be an $S$-sorted signature and $A$ an $S$-sorted set. The $\fmon{S}\bprod S$-sorted set of the \emph{finitary
operations on} $A$ is the family $(\mathrm{Hom}(A_{w},A_{s}))_{(w,s)\in\fmon{S}\bprod S}$, where, for every $w\in \fmon{S}$, $A_{w} = \prod_{i\in \bb{w}}A_{w_{i}}$. A \emph{structure of} $\Sigma$-\emph{algebra} \emph{on} $A$ is an $\fmon{S}\bprod S$-mapping $F = (F_{w,s})_{(w,s)\in \fmon{S}\times S}$ from $\Sigma$ to $(\mathrm{Hom}(A_{w},A_{s}))_{(w,s)\in\fmon{S}\bprod S}$. For a pair $(w,s)\in \fmon{S}\times S$ and a formal operation $\sigma\in \Sigma_{w,s}$, in order to simplify the notation, the operation from $A_{w}$ to $A_{s}$ corresponding to $\sigma$ under $F_{w,s}$ will be written as $F_{\sigma}$ instead of $F_{w,s}(\sigma)$. A $\Sigma$-\emph{algebra} is a pair $(A,F)$, abbreviated to $\mathbf{A}$, where $A$ is an $S$-sorted set and $F$ a structure of $\Sigma$-algebra on $A$.
\end{definition}

Since it will be used afterwards, we next define, for a set of sorts $S$ and an $S$-sorted set $A$, the notions of algebraic and of uniform many-sorted closure operator on $A$.

\begin{definition}
A many-sorted closure operator $J$ on an $S$-sorted set $A$ is \emph{algebraic} if, for every $X\subseteq A$, $J(X) = \bigcup_{K\subseteq_{\mathrm{fin}} X}J(K)$, and is \emph{uniform} if, for every $X$, $Y\subseteq A$, if $\mathrm{supp}_{S}(X) = \mathrm{supp}_{S}(Y)$, then $\mathrm{supp}_{S}(J(X)) = \mathrm{supp}_{S}(J(Y))$.
\end{definition}

We next prove that, for a many-sorted closure operator, the property of being $n$-ary is stronger than that of being algebraic.

\begin{proposition}\label{$n$-aryalg}
Let $n$ be a natural number. If a many-sorted closure operator $J$ on an $S$-sorted set $A$ is $n$-ary, then $J$ is an algebraic many-sorted closure operator on $A$.
\end{proposition}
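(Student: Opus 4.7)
The plan is to establish both inclusions of the equality $J(X) = \bigcup_{K\subseteq_{\mathrm{fin}} X}J(K)$ for each $X\subseteq A$. The inclusion $\bigcup_{K\subseteq_{\mathrm{fin}} X}J(K) \subseteq J(X)$ is immediate from the isotonicity of $J$, so the real work is to prove the reverse inclusion $J(X)\subseteq \bigcup_{K\subseteq_{\mathrm{fin}} X}J(K)$.

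By hypothesis $J = J^{\omega}_{\leq n}$, so it suffices to show that for every $X\subseteq A$ and every $m\in \mathbb{N}$, $J^{m}_{\leq n}(X) \subseteq \bigcup_{K\subseteq_{\mathrm{fin}} X}J(K)$. I would prove by induction on $m$ the slightly stronger statement: for every $s\in S$ and every $a\in J^{m}_{\leq n}(X)_{s}$, there exists a finite $K\subseteq X$ such that $a\in J^{m}_{\leq n}(K)_{s}$. The base case $m = 0$ is handled by taking $K = \delta^{s,a}$. For the inductive step, given $a\in J^{k+1}_{\leq n}(X)_{s} = J_{\leq n}(J^{k}_{\leq n}(X))_{s}$, there is some $Y\in \mathrm{Sub}_{\leq n}(J^{k}_{\leq n}(X))$ with $a\in J(Y)_{s}$. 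Since $\mathrm{card}(Y)\leq n$, $Y$ is finite; for each sort $t\in S$ and each $y\in Y_{t}$ the induction hypothesis supplies a finite $K_{t,y}\subseteq X$ with $y\in J^{k}_{\leq n}(K_{t,y})_{t}$. Setting $K = \bigcup_{t\in S,\, y\in Y_{t}}K_{t,y}$, which is finite as a finite union of finite sets, the isotonicity of $J^{k}_{\leq n}$ yields $Y\subseteq J^{k}_{\leq n}(K)$; hence $Y\in \mathrm{Sub}_{\leq n}(J^{k}_{\leq n}(K))$ and therefore $a\in J(Y)\subseteq J^{k+1}_{\leq n}(K)$.

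To close the argument I need the auxiliary fact that $J^{m}_{\leq n}(K)\subseteq J(K)$ for every $K\subseteq A$ and every $m\in \mathbb{N}$, which is established by a straightforward parallel induction on $m$ using the extensiveness, isotonicity, and idempotence of $J$: the base case is $J^{0}_{\leq n}(K) = K\subseteq J(K)$, and if $J^{k}_{\leq n}(K)\subseteq J(K)$ then every $Y\in \mathrm{Sub}_{\leq n}(J^{k}_{\leq n}(K))$ satisfies $Y\subseteq J(K)$, whence $J(Y)\subseteq J(J(K)) = J(K)$. With this in hand, the element $a\in J^{k+1}_{\leq n}(K)_{s}$ produced in the previous paragraph lies in $J(K)_{s}$, as required. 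No serious obstacle is expected; the main point is simply that each step in the iterative construction of $J^{\omega}_{\leq n}(X)$ introduces only boundedly many (namely $\leq n$) new witnesses, so finiteness propagates through the induction.
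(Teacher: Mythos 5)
Your proof is correct and follows essentially the same route as the paper's: an induction on $m$ showing that every element of $J^{m}_{\leq n}(X)$ has a finite witness set $K\subseteq X$, obtained in the inductive step by taking the finite union of the finite witnesses of the at most $n$ elements of the intermediate set $Y$. The only difference is organizational --- you strengthen the induction hypothesis to track $a\in J^{m}_{\leq n}(K)$ and isolate $J^{m}_{\leq n}(K)\subseteq J(K)$ as a separate lemma, whereas the paper keeps the induction at the level of the inclusion $J^{m}_{\leq n}(X)\subseteq\bigcup_{K\subseteq_{\mathrm{fin}}X}J(K)$ and invokes the idempotence of $J$ directly inside the step.
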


\begin{proof}
Let $J$ be an $n$-ary many-sorted closure operator on an $S$-sorted set $A$ and let $X$ be a subset of $A$. Then, obviously,  $\bigcup_{K\subseteq_{\mathrm{fin}} X}J(K)\subseteq J(X)$. Since $J(X) = J^{\omega}_{\leq n}(X) = \bigcup_{m\in \mathbb{N}}J^{m}_{\leq n}(X)$, to prove that $J(X)\subseteq \bigcup_{K\subseteq_{\mathrm{fin}} X}J(K)$ it suffices to prove that, for every $m\in \mathbb{N}$, $J^{m}_{\leq n}(X)\subseteq \bigcup_{K\subseteq_{\mathrm{fin}} X}J(K)$.

For $m = 0$, since $J^{0}_{\leq n}(X) = X$, we have that $J^{0}_{\leq n}(X)\subseteq \bigcup_{K\subseteq_{\mathrm{fin}} X}J(K)$.

Let $m$ be  $k+1$ with $k\geq 0$ and let us suppose that $J^{k}_{\leq n}(X)\subseteq \bigcup_{K\subseteq_{\mathrm{fin}} X}J(K)$. We want to prove that $J^{k+1}_{\leq n}(X)\subseteq \bigcup_{K\subseteq_{\mathrm{fin}} X}J(K)$. However, by definition,  $J^{k+1}_{\leq n}(X) = \bigcup\{J(Z)\mid Z\in \mathrm{Sub}_{\leq n}(J^{k}_{\leq n}(X))\}$. Thus it suffices to prove that, for every $Z\in \mathrm{Sub}_{\leq n}(J^{k}_{\leq n}(X))$, $J(Z)\subseteq \bigcup_{K\subseteq_{\mathrm{fin}} X}J(K)$. Let $Z$ be a subset of $J^{k}_{\leq n}(X)$ such that $\mathrm{card}(Z)\leq n$. Then, since, by the induction hypothesis, $J^{k}_{\leq n}(X)\subseteq \bigcup_{K\subseteq_{\mathrm{fin}} X}J(K)$, we have that $Z\subseteq \bigcup_{K\subseteq_{\mathrm{fin}} X}J(K)$ and, in addition, that $\mathrm{card}(Z)\leq n$. Hence, for some $\ell\in \mathbb{N}$, $\mathrm{supp}_{S}(Z) = \{s_{0},\ldots,s_{\ell-1}\}$ and, for every $\alpha\in \ell$, there exists an $n_{\alpha}\in \mathbb{N}-1$ such that $Z_{s_{\alpha}} = \{z_{\alpha,0},\ldots,z_{\alpha,n_{\alpha}-1}\}$. Therefore, for every $\alpha\in \ell$ and every $\beta\in n_{\alpha}$ there exists a $K^{\alpha,\beta}\subseteq_{\mathrm{fin}}X$ such that that $z_{\alpha,\beta}\in J(K^{\alpha,\beta})_{s_{\alpha}}$. Since it may be helpful for the sake of understanding, let us represent the situation just described by the following figure:
$$
%\left(
\begin{matrix}
z_{0,0}\in J(K^{0,0})_{s_{0}} &\dots & z_{0,n_{0}-1}\in J(K^{0,n_{0}-1})_{s_{0}}\\
\vdots &\ddots & \vdots \\
z_{\ell-1,0}\in J(K^{\ell-1,0})_{s_{\ell-1}} &\dots & z_{\ell-1,n_{\ell-1}-1}\in J(K^{\ell-1,n_{\ell-1}-1})_{s_{\ell-1}}
\end{matrix}
%\right)
$$
Then, for every $\alpha\in \ell$, $Z_{s_{\alpha}}\subseteq J(\bigcup_{\beta\in n_{\alpha}}K^{\alpha,\beta})_{s_{\alpha}}$, where $\bigcup_{\beta\in n_{\alpha}}K^{\alpha,\beta}\subseteq_{\mathrm{fin}}X$. So, for $L = \bigcup_{\alpha\in\ell}\bigcup_{\beta\in n_{\alpha}}K^{\alpha,\beta}$, we have that $L\subseteq_{\mathrm{fin}}X$ and $Z\subseteq J(L)$. Therefore $J(Z)\subseteq J(J(L)) = J(L)\subseteq \bigcup_{K\subseteq_{\mathrm{fin}} X}J(K)$.
\end{proof}

We next define when a subset $X$ of the underlying $S$-sorted set $A$ of a $\Sigma$-algebra $\mathbf{A}$ is closed under an operation $F_{\sigma}$ of $\mathbf{A}$, as well as when $X$ is a subalgebra of $\mathbf{A}$.

\begin{definition}\label{Subalg}
Let $\mathbf{A}$ be a $\Sigma$-algebra and $X\subseteq A$. Let $\sigma$ be a formal operation in $\Sigma_{w,s}$. We say that $X$ is \emph{closed under the operation} $F_{\sigma}\colon A_{w}\mor A_{s}$ if, for every $a\in X_{w}$, $F_{\sigma}(a)\in X_{s}$. We say that $X$ is a \emph{subalgebra} of $\mathbf{A}$ if $X$ is closed under the operations of $\mathbf{A}$. We denote by $\mathrm{Sub}(\mathbf{A})$ the set of all subalgebras of $\mathbf{A}$ (which is an algebraic closure system on $A$).
\end{definition}

\begin{definition}
Let $\mathbf{A}$ be a $\Sigma$-algebra. Then we denote by $\mathrm{Sg}_{\mathbf{A}}$ the many-sorted closure operator on $A$ defined as follows:
$$\textstyle
  \mathrm{Sg}_{\mathbf{A}}\nfunction
  {\mathrm{Sub}(A)}
  {\mathrm{Sub}(A)}
  {X}
  {\bigcap \{\,C\in\mathrm{Sub}(\mathbf{A})\mid X\subseteq C\,\},}.
$$
We call $\mathrm{Sg}_{\mathbf{A}}$ the \emph{subalgebra generating many-sorted operator on} $A$ \emph{determined by} $\mathbf{A}$. For every $X\subseteq A$, we call $\mathrm{Sg}_{\mathbf{A}}(X)$ the \emph{subalgebra of} $\mathbf{A}$ \emph{generated by} $X$. Moreover, if $X\subseteq A$ is such that  $\mathrm{Sg}_{\mathbf{A}}(X) = A$, then we say that $X$ is an $S$-sorted set of \emph{generators} of $\mathbf{A}$, or that $X$ \emph{generates} $\mathbf{A}$. Besides, we say that $\mathbf{A}$ is \emph{finitely generated} if there exists an $S$-sorted  subset $X$ of $A$ such that $X$ \emph{generates} $\mathbf{A}$ and $\mathrm{card}(X)<\aleph_{0}$.
\end{definition}

\begin{proposition}\label{Alg}
Let $\mathbf{A}$ be a $\Sigma$-algebra. Then the many-sorted closure operator $\mathrm{Sg}_{\mathbf{A}}$ on $A$ is algebraic, i.e., for every $S$-sorted subset $X$ of $A$, $\mathrm{Sg}_{\mathbf{A}}(X) =
\bigcup_{K\subseteq_{\mathrm{fin}} X}\mathrm{Sg}_{\mathbf{A}}(K)$.
\end{proposition}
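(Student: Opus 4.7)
The plan is to show the non-trivial inclusion by exhibiting $Y := \bigcup_{K\subseteq_{\mathrm{fin}} X}\mathrm{Sg}_{\mathbf{A}}(K)$ as a subalgebra of $\mathbf{A}$ containing $X$; since $\mathrm{Sg}_{\mathbf{A}}(X)$ is the intersection of all such subalgebras, this will force $\mathrm{Sg}_{\mathbf{A}}(X)\subseteq Y$. Note that I cannot simply invoke Proposition \ref{$n$-aryalg}, because $\Sigma$ may contain operations of arbitrarily large arity, so $\mathrm{Sg}_{\mathbf{A}}$ need not be $n$-ary for any fixed $n$; a direct argument is therefore necessary.

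The easy inclusion $Y\subseteq\mathrm{Sg}_{\mathbf{A}}(X)$ is immediate from the isotonicity of $\mathrm{Sg}_{\mathbf{A}}$, since $K\subseteq X$ implies $\mathrm{Sg}_{\mathbf{A}}(K)\subseteq\mathrm{Sg}_{\mathbf{A}}(X)$. For the reverse inclusion, I first observe that $X\subseteq Y$: for each $s\in S$ and $x\in X_{s}$, the delta of Kronecker $\delta^{s,x}$ is a finite subset of $X$ by the characterization of finite many-sorted sets via supports, and $x\in\mathrm{Sg}_{\mathbf{A}}(\delta^{s,x})_{s}$ by the extensivity of $\mathrm{Sg}_{\mathbf{A}}$. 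Hence $x\in Y_{s}$.

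The main step is showing that $Y$ is closed under every operation of $\mathbf{A}$, in the sense of Definition \ref{Subalg}. Fix $(w,s)\in\fmon{S}\bprod S$, a formal operation $\sigma\in\Sigma_{w,s}$, and an element $a=(a_{i})_{i\in\bb{w}}\in Y_{w}$. For each $i\in\bb{w}$, the inclusion $a_{i}\in Y_{w_{i}}$ yields some $K^{i}\subseteq_{\mathrm{fin}}X$ with $a_{i}\in\mathrm{Sg}_{\mathbf{A}}(K^{i})_{w_{i}}$. Set $L=\bigcup_{i\in\bb{w}}K^{i}$; since $\bb{w}$ is finite and each $K^{i}$ is a finite subset of $X$, we get $L\subseteq_{\mathrm{fin}}X$. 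By isotonicity, $a_{i}\in\mathrm{Sg}_{\mathbf{A}}(L)_{w_{i}}$ for every $i$, so $a\in\mathrm{Sg}_{\mathbf{A}}(L)_{w}$. Since $\mathrm{Sg}_{\mathbf{A}}(L)$ is a subalgebra of $\mathbf{A}$, we obtain $F_{\sigma}(a)\in\mathrm{Sg}_{\mathbf{A}}(L)_{s}\subseteq Y_{s}$, as desired.

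The only point requiring any care is the finiteness of $L$, which is why the argument rests on the fact that formal operations have finite arities $w\in\fmon{S}$; otherwise the union defining $L$ could fail to be finite. Having established that $Y$ is a subalgebra of $\mathbf{A}$ with $X\subseteq Y$, the definition of $\mathrm{Sg}_{\mathbf{A}}(X)$ as the intersection of all such subalgebras gives $\mathrm{Sg}_{\mathbf{A}}(X)\subseteq Y$, completing the proof.
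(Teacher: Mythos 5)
Your proof is correct. The paper states Proposition~\ref{Alg} without giving a proof, and your argument---exhibiting $\bigcup_{K\subseteq_{\mathrm{fin}}X}\mathrm{Sg}_{\mathbf{A}}(K)$ as a subalgebra containing $X$, with the finiteness of $L=\bigcup_{i\in\bb{w}}K^{i}$ resting precisely on the finite arity $\bb{w}$ of each formal operation---is the standard one and fills the gap correctly; your observation that one cannot shortcut via the ``$n$-ary implies algebraic'' proposition is also well taken, since $\Sigma$ may have operations of unbounded arity. The only other natural route, using Proposition~\ref{Sg} and an induction on the stages $\mathrm{E}^{n}_{\mathbf{A}}(X)$, is unavailable at this point of the paper because Proposition~\ref{Sg} comes later, so your direct closure-system argument is the appropriate one.
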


%\begin{remark}
%Actually, for a $\Sigma$-algebra $\mathbf{A}$, the following conditions are equivalent:
%\begin{enumerate}
%\item For every $S$-sorted subset $X$ of $A$, $\mathrm{Sg}_{\mathbf{A}}(X) =
%      \bigcup_{K\subseteq_{\mathrm{fin}} X}\mathrm{Sg}_{\mathbf{A}}(K)$.
%\item For every $\mathcal{X}\subseteq \mathrm{Sub}(A)$, if $\mathcal{X}\neq \varnothing$ and, for every $X,Y\in \mathcal{X}$,
%      there exists a $Z\in \mathcal{X}$ such that $X\cup Y\subseteq Z$, then $\mathrm{Sg}_{\mathbf{A}}(\bigcup \mathcal{X}) =
%      \bigcup_{X\in \mathcal{X}}\mathrm{Sg}_{\mathbf{A}}(X)$.
%\end{enumerate}
%\end{remark}

For a $\Sigma$-algebra $\mathbf{A}$ we next provide another, more constructive, description of the algebraic many-sorted closure operator $\Sg_{\mathbf{A}}$, which, in addition, will allow us to state a crucial property of $\Sg_{\mathbf{A}}$. Specifically, that $\Sg_{\mathbf{A}}$ is uniform.

\begin{definition}
Let $\Sigma$ be an $S$-sorted signature and $\mathbf{A}$ a $\Sigma$-algebra.
\begin{enumerate}
\item We denote by $\E_{\mathbf{A}}$ the many-sorted operator on
      $A$ that assigns to an $S$-sorted subset $X$ of
      $A$, $\E_{\mathbf{A}}(X) =
      X\cup\bigl(\;\bigcup_{\sigma\in\Sigma_{\cdot,s}}
      F_{\sigma}[X_{\ard(\sigma)}]\bigr)_{s\in S}$, where, for $s\in
      S$, $\Sigma_{\cdot,s}$ is the set of all many-sorted formal
      operations $\sigma$ such that the coarity of $\sigma$ is $s$ and
      for $\ard(\sigma) = w\in \fmon{S}$, the arity of $\sigma$,
      $X_{\ard(\sigma)} = \prod_{i\in \bb{w}}X_{w_{i}}$.

\item If $X\subseteq A$, then we define the family
      $(\E^{n}_{\mathbf{A}}(X))_{n\in \mathbb{N}}$ in $\mathrm{Sub}(A)$,
      recursively, as follows:
      \begin{align*}
      \E_{\mathbf{A}}^{0}(X) &= X\text{,} \\
      \E_{\mathbf{A}}^{n+1}(X) &=
      \E_{\mathbf{A}}(\E_{\mathbf{A}}^{n}(X)) \text{, $n\geq 0$.}
      \end{align*}
\item We denote by $\E_{\mathbf{A}}^{\omega}$ the many-sorted operator on
      $A$ that assigns to an $S$-sorted subset $X$ of
      $A$, $\E_{\mathbf{A}}^{\omega}(X) = \bigcup_{n\in
      \mathbb{N}}\E_{\mathbf{A}}^{n}(X)$.
      %defined as:
%     $$\E_{\mathbf{A}}^{\omega}
%               \nfunction{\mathrm{Sub}(A)}{\mathrm{Sub}(A)}
%               {X}
%               {\union\nolimits_{n\in \mathbb{N}}\E_{\mathbf{A}}^{n}(X)}
%       $$
\end{enumerate}
\end{definition}

\begin{proposition}\label{Sg}
Let $\mathbf{A}$ be a $\Sigma$-algebra and $X\subseteq A$, then $\Sg_{\mathbf{A}}(X) = \E_{\mathbf{A}}^{\omega}(X)$.
\end{proposition}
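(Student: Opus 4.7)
The plan is a standard double-inclusion argument, carried out sort by sort. I will prove separately that $\E_{\mathbf{A}}^{\omega}(X)\subseteq \Sg_{\mathbf{A}}(X)$ and that $\Sg_{\mathbf{A}}(X)\subseteq \E_{\mathbf{A}}^{\omega}(X)$.

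For the first inclusion, I would argue by induction on $n\in\mathbb{N}$ that $\E_{\mathbf{A}}^{n}(X)\subseteq \Sg_{\mathbf{A}}(X)$. The base case $n=0$ is immediate since $\E_{\mathbf{A}}^{0}(X)=X$ and, by Definition~\ref{Subalg}, $X\subseteq C$ for every subalgebra $C$ containing $X$, so $X\subseteq \Sg_{\mathbf{A}}(X)$. For the inductive step, note that $\Sg_{\mathbf{A}}(X)$, being an intersection of subalgebras of $\mathbf{A}$, is itself a subalgebra of $\mathbf{A}$, hence closed under every $F_{\sigma}$. Therefore, from $\E_{\mathbf{A}}^{n}(X)\subseteq \Sg_{\mathbf{A}}(X)$ and closure under operations we get $F_{\sigma}[\E_{\mathbf{A}}^{n}(X)_{\ard(\sigma)}]\subseteq \Sg_{\mathbf{A}}(X)_{s}$ for every $\sigma\in \Sigma_{\cdot,s}$, which, together with $\E_{\mathbf{A}}^{n}(X)\subseteq \Sg_{\mathbf{A}}(X)$, yields $\E_{\mathbf{A}}^{n+1}(X)=\E_{\mathbf{A}}(\E_{\mathbf{A}}^{n}(X))\subseteq \Sg_{\mathbf{A}}(X)$.

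For the reverse inclusion, the idea is to show that $\E_{\mathbf{A}}^{\omega}(X)$ is itself a subalgebra of $\mathbf{A}$ containing $X$; once this is established, it will be one of the $C$'s intersected in the definition of $\Sg_{\mathbf{A}}(X)$, and hence $\Sg_{\mathbf{A}}(X)\subseteq \E_{\mathbf{A}}^{\omega}(X)$. That $X\subseteq \E_{\mathbf{A}}^{\omega}(X)$ is clear since $X=\E_{\mathbf{A}}^{0}(X)$. To check closure, I would first observe, by a simple induction using the fact that $Y\subseteq \E_{\mathbf{A}}(Y)$ for every $Y$, that $(\E_{\mathbf{A}}^{n}(X))_{n\in \mathbb{N}}$ is an ascending chain. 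Then, for $(w,s)\in\fmon{S}\times S$, $\sigma\in\Sigma_{w,s}$, and $a\in \E_{\mathbf{A}}^{\omega}(X)_{w}=\prod_{i\in \bb{w}}\E_{\mathbf{A}}^{\omega}(X)_{w_{i}}$, each coordinate $a_{i}$ belongs to some $\E_{\mathbf{A}}^{n_{i}}(X)_{w_{i}}$; here I use crucially that $\bb{w}$ is a natural number, so $n=\max\{n_{i}\mid i\in \bb{w}\}$ exists, and the ascending chain property gives $a_{i}\in \E_{\mathbf{A}}^{n}(X)_{w_{i}}$ for every $i\in \bb{w}$. Hence $a\in \E_{\mathbf{A}}^{n}(X)_{w}$ and $F_{\sigma}(a)\in F_{\sigma}[\E_{\mathbf{A}}^{n}(X)_{w}]\subseteq \E_{\mathbf{A}}^{n+1}(X)_{s}\subseteq \E_{\mathbf{A}}^{\omega}(X)_{s}$.

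The only step that requires a modicum of care is the one involving the finite arity of the operations: the argument that a tuple in the product indexed by $\bb{w}$ of members of an ascending union lies in a single stage of the union relies on $\bb{w}$ being finite, which is guaranteed by the definition of $\fmon{S}$. Beyond that, the proof is a routine transcription of the single-sorted argument to the many-sorted setting, and no further subtlety arises.
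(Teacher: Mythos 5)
Your proof is correct: the paper states this proposition without proof (it is the standard fact that $\E_{\mathbf{A}}^{\omega}(X)$ is the least subalgebra containing $X$), and your double-inclusion argument --- induction to show each $\E_{\mathbf{A}}^{n}(X)$ lies in $\Sg_{\mathbf{A}}(X)$, and finiteness of $\bb{w}$ plus the ascending-chain property to show $\E_{\mathbf{A}}^{\omega}(X)$ is a subalgebra --- is exactly the canonical way to prove it. No gaps; the one point deserving care (a tuple over the increasing union lies in a single stage because arities are finite) is the one you flag.
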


In~\cite{cs04}, on pp. 82, we stated the following proposition (there called Proposition 2.7).

\begin{proposition}\label{Unif}
Let $\mathbf{A}$ be a $\Sigma$-algebra and $X, Y\subseteq A$. Then we have that
\begin{enumerate}
\item  If $\mathrm{supp}_{S}(X) = \mathrm{supp}_{S}(Y)$, then, for every $n\in \mathbb{N}$, $\mathrm{supp}_{S}(\mathrm{E}^{n}_{\mathbf{A}}(X)) = \mathrm{supp}_{S}(\mathrm{E}^{n}_{\mathbf{A}}(Y))$.
\item $\mathrm{supp}_{S}(\mathrm{Sg}_{\mathbf{A}}(X)) = \bigcup_{n\in \mathbb{N}}\mathrm{supp}_{S}(\mathrm{E}^{n}_{\mathbf{A}}(X))$.
\item If $\mathrm{supp}_{S}(X) = \mathrm{supp}_{S}(Y)$, then $\mathrm{supp}_{S}(\mathrm{Sg}_{\mathbf{A}}(X)) = \mathrm{supp}_{S}(\mathrm{Sg}_{\mathbf{A}}(Y))$.
\end{enumerate}

Therefore the algebraic many-sorted closure operator $\mathrm{Sg}_{\mathbf{A}}$ is uniform.
\end{proposition}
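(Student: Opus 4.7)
The plan is to prove the three numbered statements in order, and then the final uniformity claim will follow immediately from statement~(3) by the definition of a uniform many-sorted closure operator. Item~(1) is the heart of the argument; items~(2) and~(3) are essentially formal consequences of~(1) together with Proposition~\ref{Sg}.

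For item~(1) I would proceed by induction on $n\in \mathbb{N}$. The base case $n=0$ is trivial since $\mathrm{E}^{0}_{\mathbf{A}}(X) = X$ and $\mathrm{E}^{0}_{\mathbf{A}}(Y) = Y$, so the assumed equality $\mathrm{supp}_{S}(X) = \mathrm{supp}_{S}(Y)$ is exactly the conclusion. For the inductive step, the key observation is that for a formal operation $\sigma\in \Sigma_{w,s}$ and any $Z\subseteq A$, the image $F_{\sigma}[Z_{\mathrm{ar}(\sigma)}]$ is non-empty if and only if the product $Z_{w} = \prod_{i\in \bb{w}}Z_{w_{i}}$ is non-empty, which in turn happens if and only if every sort $w_{i}$ appearing in the word $w$ belongs to $\mathrm{supp}_{S}(Z)$. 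Hence, unfolding the definition
$$
\mathrm{E}_{\mathbf{A}}(Z) = Z\cup \Bigl(\,\bigcup_{\sigma\in \Sigma_{\cdot,s}}F_{\sigma}[Z_{\mathrm{ar}(\sigma)}]\Bigr)_{s\in S},
$$
one obtains that $s\in \mathrm{supp}_{S}(\mathrm{E}_{\mathbf{A}}(Z))$ if and only if $s\in \mathrm{supp}_{S}(Z)$ or there exists some $\sigma\in \Sigma_{w,s}$ such that every sort appearing in $w$ lies in $\mathrm{supp}_{S}(Z)$. This characterization depends only on $\mathrm{supp}_{S}(Z)$ and on the signature $\Sigma$, not on the particular elements of $Z$. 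Applying this to $Z=\mathrm{E}^{n}_{\mathbf{A}}(X)$ and $Z=\mathrm{E}^{n}_{\mathbf{A}}(Y)$, which by the induction hypothesis have the same support, yields $\mathrm{supp}_{S}(\mathrm{E}^{n+1}_{\mathbf{A}}(X)) = \mathrm{supp}_{S}(\mathrm{E}^{n+1}_{\mathbf{A}}(Y))$.

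For item~(2), I would invoke Proposition~\ref{Sg}, which asserts that $\mathrm{Sg}_{\mathbf{A}}(X) = \mathrm{E}^{\omega}_{\mathbf{A}}(X) = \bigcup_{n\in \mathbb{N}}\mathrm{E}^{n}_{\mathbf{A}}(X)$, and then use the obvious fact that the support operator commutes with unions of $S$-sorted sets: for any family $(Z^{n})_{n\in \mathbb{N}}$ of subsets of $A$, a sort $s$ lies in $\mathrm{supp}_{S}(\bigcup_{n}Z^{n})$ precisely when $(\bigcup_{n}Z^{n})_{s} = \bigcup_{n}Z^{n}_{s}$ is non-empty, i.e., when $s$ lies in $\mathrm{supp}_{S}(Z^{n})$ for some $n$. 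Item~(3) then follows at once by chaining~(2) applied to $X$ and to $Y$ with the pointwise equalities granted by~(1). Finally, since item~(3) says exactly that $\mathrm{supp}_{S}(\mathrm{Sg}_{\mathbf{A}}(X)) = \mathrm{supp}_{S}(\mathrm{Sg}_{\mathbf{A}}(Y))$ whenever $\mathrm{supp}_{S}(X) = \mathrm{supp}_{S}(Y)$, this is the defining condition for $\mathrm{Sg}_{\mathbf{A}}$ to be uniform.

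The only real obstacle in the argument is isolating the right invariant in step~(1), namely the fact that the support of $\mathrm{E}_{\mathbf{A}}(Z)$ is determined entirely by $\mathrm{supp}_{S}(Z)$ together with the signature $\Sigma$. Once this is made explicit, induction and the passage to $\omega$ are purely formal, and the uniformity of $\mathrm{Sg}_{\mathbf{A}}$ drops out without further work.
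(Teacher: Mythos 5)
Your proof is correct. Note that the paper itself gives no proof of this proposition---it is quoted from \cite{cs04} (Proposition 2.7 there)---but your argument is the natural one: the key observation that $F_{\sigma}[Z_{\mathrm{ar}(\sigma)}]\neq\varnothing$ precisely when every sort occurring in the arity word lies in $\mathrm{supp}_{S}(Z)$, so that $\mathrm{supp}_{S}(\mathrm{E}_{\mathbf{A}}(Z))$ depends only on $\mathrm{supp}_{S}(Z)$ and $\Sigma$, is exactly the invariant needed, and the induction plus passage to the union over $n$ is the standard route to uniformity of $\mathrm{Sg}_{\mathbf{A}}$.
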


\begin{proposition}
If $\mathbf{A}$ is a finitely generated $\Sigma$-algebra, then every $S$-sorted set of generators of $\mathbf{A}$ contains a finite $S$-sorted subset which also generates $\mathbf{A}$.
\end{proposition}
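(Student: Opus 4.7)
The plan is to exploit two facts already in hand: first, by hypothesis there is a finite $S$-sorted subset $Y\subseteq A$ with $\mathrm{Sg}_{\mathbf{A}}(Y) = A$; second, by Proposition~\ref{Alg} the operator $\mathrm{Sg}_{\mathbf{A}}$ is algebraic, so for any $S$-sorted subset $Z\subseteq A$ we have $\mathrm{Sg}_{\mathbf{A}}(Z) = \bigcup_{K\subseteq_{\mathrm{fin}} Z}\mathrm{Sg}_{\mathbf{A}}(K)$.

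Let $X\subseteq A$ be an arbitrary generating $S$-sorted set, i.e., $\mathrm{Sg}_{\mathbf{A}}(X) = A$. Since $Y$ is finite, its support $\mathrm{supp}_{S}(Y)$ is finite and each $Y_{s}$ is finite, by the proposition characterising finite $S$-sorted sets. For every $s\in \mathrm{supp}_{S}(Y)$ and every $y\in Y_{s}$ we have $y\in A_{s} = \mathrm{Sg}_{\mathbf{A}}(X)_{s}$; hence, by the algebraic character of $\mathrm{Sg}_{\mathbf{A}}$, there is some finite $S$-sorted subset $K^{s,y}\subseteq_{\mathrm{fin}}X$ such that $y\in \mathrm{Sg}_{\mathbf{A}}(K^{s,y})_{s}$.

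I then set
\[
\textstyle
X' \;=\; \bigcup_{s\in \mathrm{supp}_{S}(Y)}\,\bigcup_{y\in Y_{s}} K^{s,y}.
\]
Since the index set is finite and each $K^{s,y}$ is a finite $S$-sorted subset of $X$, the union $X'$ is a finite $S$-sorted subset of $X$. By construction, for every $s\in S$ and every $y\in Y_{s}$ we have $y\in \mathrm{Sg}_{\mathbf{A}}(K^{s,y})_{s}\subseteq \mathrm{Sg}_{\mathbf{A}}(X')_{s}$, that is, $Y\subseteq \mathrm{Sg}_{\mathbf{A}}(X')$.

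Finally, applying isotonicity and idempotence of the many-sorted closure operator $\mathrm{Sg}_{\mathbf{A}}$, we obtain
\[
A \;=\; \mathrm{Sg}_{\mathbf{A}}(Y) \;\subseteq\; \mathrm{Sg}_{\mathbf{A}}(\mathrm{Sg}_{\mathbf{A}}(X')) \;=\; \mathrm{Sg}_{\mathbf{A}}(X') \;\subseteq\; A,
\]
so $\mathrm{Sg}_{\mathbf{A}}(X') = A$, which shows that $X'$ is the required finite generating subset of $X$. There is no real obstacle; the only point requiring a little care is verifying that $X'$ is indeed finite as an $S$-sorted set, which follows because both $\mathrm{supp}_{S}(Y)$ and each $Y_{s}$ are finite, so the double union that defines $X'$ is a finite union of finite $S$-sorted sets.
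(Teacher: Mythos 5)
Your proof is correct and is the natural argument via the algebraicity of $\mathrm{Sg}_{\mathbf{A}}$ (Proposition~\ref{Alg}): the paper itself leaves this proposition without an explicit proof, and your construction of $X'$ as a finite union of the finite witnesses $K^{s,y}$, followed by isotonicity and idempotence, is exactly the standard route one would expect it to take. The finiteness check on $X'$ is handled properly using the characterization of finite $S$-sorted sets by finite support and finite coordinates.
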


\begin{corollary}
If $\mathbf{A}$ is a finitely generated $\Sigma$-algebra, then we have that $\mathrm{IrB}(A,\mathrm{Sg}_{\mathbf{A}})$ is not empty.
\end{corollary}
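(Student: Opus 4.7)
The plan is to use the previous proposition together with the well-ordering of $\mathbb{N}$ and the equivalence between irredundant bases and minimal bases.

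First, since $\mathbf{A}$ is finitely generated, by the previous proposition every generating set contains a finite generating subset; in particular, the set of cardinals of finite $S$-sorted subsets $X\subseteq A$ with $\Sg_{\mathbf{A}}(X)=A$ is a nonempty subset of $\mathbb{N}$. By the well-ordering of $\mathbb{N}$, we may choose a finite generating subset $X\subseteq A$ whose cardinal $n=\mathrm{card}(X)$ is the minimum of this set.

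Next, I would verify that such an $X$ is a minimal basis with respect to $\Sg_{\mathbf{A}}$. By construction, $\Sg_{\mathbf{A}}(X)=A$. If $Y\subset X$ were a proper $S$-sorted subset with $\Sg_{\mathbf{A}}(Y)=A$, then, since $X$ is finite, $Y$ would also be finite and satisfy $\mathrm{card}(Y)<\mathrm{card}(X)=n$, contradicting the minimality of $n$. Hence no proper $S$-sorted subset of $X$ generates $\mathbf{A}$, so $X$ is a minimal basis with respect to $\Sg_{\mathbf{A}}$.

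Finally, by the proposition asserting that for a many-sorted closure space minimal bases coincide with irredundant bases, $X$ is an irredundant basis of $A$ with respect to $\Sg_{\mathbf{A}}$. Since $\mathrm{card}(X)=n\in\mathbb{N}$, we conclude that $n\in \mathrm{IrB}(A,\Sg_{\mathbf{A}})$, so $\mathrm{IrB}(A,\Sg_{\mathbf{A}})\neq\varnothing$. There is no real obstacle here; the only mildly delicate point is ensuring that the proper-subset condition in the definition of minimal basis is indeed captured by strict inequality of cardinals, which holds precisely because we have restricted attention to finite generating subsets provided by the previous proposition.
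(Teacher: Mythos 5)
Your proof is correct and follows exactly the route the paper intends (the paper leaves this corollary without an explicit proof, deriving it from the preceding proposition): pick a finite generating set of minimum cardinal, observe that any proper $S$-sorted subset of a finite $S$-sorted set has strictly smaller cardinal so the chosen set is a minimal basis, and invoke the equivalence of minimal and irredundant bases. Nothing is missing.
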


\section{A characterization of the $n$-ary many-sorted closure operators.}\hfill

A theorem of Birkhoff-Frink (see~\cite{bf48}) asserts that every algebraic closure operator on an ordinary set arises, from some algebraic structure on the set, as the corresponding generated subalgebra operator. However, for many-sorted sets such a theorem is not longer true without qualification. In~\cite{cs04}, on pp. 83--84, Theorem 3.1 and  Corollary 3.2, we characterized the corresponding many-sorted closure operators as precisely the uniform algebraic operators. We next recall the just mentioned characterization since it will be applied afterwards to provide a characterization of the $n$-ary many-sorted closure operators on an $S$-sorted set.

Let us notice that in what follows, for a word $w\colon \lvert w \rvert\rightarrow S$ on $S$, with $\lvert w \rvert$ the lenght of $w$, and an $s\in S$, we denote by $w^{-1}[s]$ the set $\{i\in \lvert w \rvert \mid w(i)=s \}$, and by $\mathrm{Im}(w)$  the set $\{w(i)\mid i\in \lvert w \rvert\}$

\begin{theorem}\label{ThRep}
Let $J$ be an algebraic many-sorted closure operator on an $S$-sorted set $A$. If $J$ is uniform, then $J = \mathrm{Sg}_{\mathbf{A}}$ for some $S$-sorted signature $\Sigma$ and some $\Sigma$-algebra $\mathbf{A}$.
\end{theorem}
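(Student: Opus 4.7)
The plan is to adapt the single-sorted Birkhoff--Frink construction, using uniformity at the key point where each formal operation must be extended from a canonical enumerating tuple to the whole of $A_w$. For each finite $S$-sorted subset $K$ of $A$, fix a word $w_K \in \fmon{S}$ of length $\mathrm{card}(K)$ together with a tuple $a_K \in A_{w_K}$ whose coordinates enumerate $\coprod K$ without repetition: for each position $i < \bb{w_K}$ we have $a_K(i) \in K_{w_K(i)}$, and collectively the $a_K(i)$ exhaust $\coprod K$. In particular $\mathrm{Im}(w_K) = \mathrm{supp}_S(K)$. Then define the $S$-sorted signature $\Sigma$ by declaring, for every such $K$, every $s \in S$, and every $a \in J(K)_s$, a formal operation symbol $\sigma_{K,a,s}$ of rank $(w_K, s)$. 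For $K = \varnothing^S$ we take $w_K = \lambda$, so $\sigma_{\varnothing^S,a,s}$ becomes a constant of sort $s$.

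To interpret $\sigma_{K,a,s}$ as an operation $F_{\sigma_{K,a,s}} \colon A_{w_K} \to A_s$, I would set $F_{\sigma_{K,a,s}}(a_K) = a$, which is the essential requirement. For any other tuple $b \in A_{w_K}$, let $L(b)$ be the $S$-sorted subset of $A$ given by $L(b)_t = \{b_i \mid i < \bb{w_K},\ w_K(i) = t\}$. Then $\mathrm{supp}_S(L(b)) = \mathrm{Im}(w_K) = \mathrm{supp}_S(K)$, so the uniformity of $J$ yields $\mathrm{supp}_S(J(L(b))) = \mathrm{supp}_S(J(K))$; since $s \in \mathrm{supp}_S(J(K))$ we get $J(L(b))_s \neq \varnothing$, and we may pick any element of $J(L(b))_s$ as $F_{\sigma_{K,a,s}}(b)$. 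This gives a $\Sigma$-algebra $\mathbf{A} = (A,F)$.

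Finally I would verify $J = \mathrm{Sg}_{\mathbf{A}}$ by double inclusion. For $\mathrm{Sg}_{\mathbf{A}}(X) \subseteq J(X)$, it suffices to check that $J(X)$ is closed under every operation of $\mathbf{A}$: given $b \in J(X)_{w_K}$, the set $L(b)$ is contained in $J(X)$, so $F_{\sigma_{K,a,s}}(b) \in J(L(b))_s \subseteq J(J(X))_s = J(X)_s$ by isotonicity and idempotency; combined with $X \subseteq J(X)$, this yields the inclusion. For $J(X) \subseteq \mathrm{Sg}_{\mathbf{A}}(X)$, use that $J$ is algebraic to write $J(X) = \bigcup_{K \subseteq_{\mathrm{fin}} X} J(K)$; for any finite $K \subseteq X$ and any $a \in J(K)_s$, the canonical tuple $a_K$ lies in $K_{w_K} \subseteq X_{w_K} \subseteq \mathrm{Sg}_{\mathbf{A}}(X)_{w_K}$, and $F_{\sigma_{K,a,s}}(a_K) = a$, so $a \in \mathrm{Sg}_{\mathbf{A}}(X)_s$, as required.

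The main obstacle is the extension step: the definition of $F_{\sigma_{K,a,s}}(b)$ on non-canonical tuples must both be well defined, which requires $J(L(b))_s$ to be non-empty, and keep $J(X)$ closed under the operation, which rests on the same non-emptiness together with idempotency. Uniformity is essential precisely here, because without it there would be no guarantee that $L(b)$ with the same support as $K$ yields a $J$-closure containing the sort $s$, and the construction of the operations would collapse.
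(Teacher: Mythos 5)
Your construction is correct and is essentially the paper's own Birkhoff--Frink-style argument: formal operations indexed by a finite subset $K$ together with an element of $J(K)_s$, with uniformity invoked at exactly the same point (to guarantee $s\in\mathrm{supp}_S(J(L(b)))$ so the operation can be defined on non-enumerating tuples), and the same two inclusions proved via algebraicity and closure of $J(X)$ under the operations. The only difference is cosmetic: you fix one canonical word and enumerating tuple per finite subset, whereas the paper admits every word whose sort-multiset matches $K$ and sets $F_{K,b}(a)=b$ on every tuple enumerating $K$ exactly.
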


\begin{proof}
Let $\Sigma = (\Sigma_{w,s})_{(w,s)\in \fmon{S}\times S}$ be the $S$-sorted signature defined, for every $(w,s)\in\fmon{S}\times S$, as follows:
$$
\Sigma_{w,s} = \{\,(X,b)\in \textstyle{\bigcup}_{X\in \mathrm{Sub}(A)}(\{X\}\times J(X)_{s}) \mid \forall\, t\in S\,(\mathrm{card}(X_{t}) = \lvert w \rvert_{t})\,\},
$$
where for a sort $s\in S$ and a word $w\colon \lvert w \rvert\rightarrow S$ on $S$, with $\lvert w \rvert$ the lenght of $w$, the number of occurrences of $s$ in $w$, denoted by $\lvert w \rvert_{s}$, is $\mathrm{card}(w^{-1}[s])$.

Before proceeding any further, let us remark that, for $(w,s)\in\fmon{S}\times S$ and $(X,b)\in\textstyle{\bigcup}_{X\in \mathrm{Sub}(A)}(\{X\}\times J(X)_{s})$, the following conditions are equivalent:

\begin{enumerate}
\item $(X,b)\in \Sigma_{w,s}$, i.e., for every $t\in S$, $\mathrm{card}(X_{t}) = \lvert w \rvert_{t}$.
\item $\mathrm{supp}_{S}(X) = \mathrm{Im}(w)$ and, for every $t\in\mathrm{supp}_{S}(X)$, $\mathrm{card}(X_{t}) = \lvert w \rvert_{t}$.
\end{enumerate}

On the other hand, for the index set $\Lambda = \bigcup_{Y\in \mathrm{Sub}(A)}(\{Y\}\times \mathrm{supp}_{S}(Y))$ and the $\Lambda$-indexed family $(Y_{s})_{(Y,s)\in\Lambda}$ whose $(Y,s)$-th coordinate is $Y_{s}$, precisely the $s$-th coordinate of the $S$-sorted set $Y$ of the index $(Y,s)\in \Lambda$, let $f$ be a choice function for $(Y_{s})_{(Y,s)\in\Lambda}$, i.e., an element of $\prod_{(Y,s)\in \Lambda}Y_{s}$.

Moreover, for every $w\in\fmon{S}$ and $a\in \prod_{i\in \lvert w \rvert}A_{w(i)}$, let $M^{w,a} = (M^{w,a}_{s})_{s\in S}$ be the finite $S$-sorted subset of $A$ defined as $M^{w,a}_{s} = \{a_{i}\mid i\in w^{-1}[s]\}$, for every $s\in S$.

Now, for $(w,s)\in\fmon{S}\times S$ and $(X,b)\in\Sigma_{w,s}$, let $F_{X,b}$ be the many-sorted operation from $\prod_{i\in\lvert w \rvert}A_{w(i)}$ into $A_{s}$ that to an $a\in \prod_{i\in\lvert w \rvert}A_{w(i)}$ assigns $b$, if $M^{w,a} = X$ and $f(J(M^{w,a}),s)$, otherwise.

We will prove that the $\Sigma$-algebra $\mathbf{A} = (A,F)$ is such that $J = \mathrm{Sg}_{\mathbf{A}}$. But before doing that it is necessary to verify that the definition of the many-sorted operations is sound, i.e., that for every $(w,s)\in\fmon{S}\times S$, $(X,b)\in \Sigma_{w,s}$ and $a\in \prod_{i\in\lvert w \rvert}A_{w(i)}$, it happens that $s\in\mathrm{supp}_{S}(J(M^{w,a}))$,  and for this it suffices to prove that $\mathrm{supp}_{S}(M^{w,a}) = \mathrm{supp}_{S}(X)$, because, by hypothesis, $J$ is uniform and, by definition, $b\in J(X)_{s}$.

If $t\in\mathrm{supp}_{S}(M^{w,a})$, then $M^{w,a}_{t}$ is nonempty, i.e., there exists an $i\in\lvert w \rvert$ such that $w(i) = t$.  Therefore, because $(X,b)\in \Sigma_{w,s}$, we have that $0<\lvert w \rvert_{t} = \mathrm{card}(X_{t})$, hence $t\in\mathrm{supp}_{S}(X)$.

Reciprocally, if $t\in\mathrm{supp}_{S}(X)$, $\lvert w \rvert_{t}>0$, and there is an $i\in\lvert w \rvert$ such that $w(i) = t$, hence $a_{i}\in A_{t}$, and from this we conclude that $M^{w,a}_{t}\neq\varnothing$, i.e., that $t\in\mathrm{supp}_{S}(M^{w,a})$.  Therefore, $\mathrm{supp}_{S}(M^{w,a}) = \mathrm{supp}_{S}(X)$ and, by the uniformity of $J$, $\mathrm{supp}_{S}(J(M^{w,a})) = \mathrm{supp}_{S}(J(X))$. But, by definition, $b\in J(X)_{s}$, so $s\in\mathrm{supp}_{S}(J(M^{w,a}))$ and the definition is sound.

Now we prove that, for every $X\subseteq A$, $J(X)\incl \mathrm{Sg}_{\mathbf{A}}(X)$. Let $X$ be an $S$-sorted subset of $A$, $s\in S$ and $b\in J(X)_{s}$.  Then, because $J$ is algebraic, $b\in J(Y)_{s}$, for some finite $S$-sorted subset $Y$ of $X$. From such an $Y$ we will define a word $w_{Y}$ in $S$ and an element $a_{Y}$ of $\prod_{i\in\lvert w_{Y} \rvert}A_{w_{Y}(i)}$ such that
\begin{enumerate}
\item[(1)] $Y = M^{w_{Y},a_{Y}}$,
\item[(2)] $(Y,b)\in \Sigma_{w_{Y},s}$, i.e., $b\in J(Y)_{s}$ and, for all $t\in S$, $\mathrm{card}(Y_{t}) = \lvert w_{Y} \rvert_{t}$, and
\item[(3)] $a_{Y}\in \prod_{i\in\lvert w_{Y} \rvert}X_{w_{Y}(i)}$,
\end{enumerate}
then, because $F_{Y,b}(a_{Y}) = b$, we will be entitled to assert that $b\in \mathrm{Sg}_{\mathbf{A}}(X)_{s}$.

But given that $Y$ is finite if, and only if, $\mathrm{supp}_{S}(Y)$ is finite and, for every $t\in \mathrm{supp}_{S}(Y)$, $Y_{t}$ is finite, let $\{\,s_{\alpha}\mid \alpha\in m\,\}$ be an enumeration of $\mathrm{supp}_{S}(Y)$ and, for every $\alpha\in m$, let $\{\,y_{\alpha,i}\mid i\in p_{\alpha}\,\}$ be an enumeration of the nonempty $s_{\alpha}$-th coordinate, $Y_{s_{\alpha}}$, of $Y$.  Then we define, on the one hand, the word $w_{Y}$ as the mapping from $\lvert w_{Y} \rvert = \sum_{\alpha\in m}p_{\alpha}$ into $S$ such that, for every $i\in \lvert w_{Y} \rvert$ and $\alpha\in m$, $w_{Y}(i) = s_{\alpha}$ if, and only if, $\sum_{\beta\in \alpha}p_{\beta}\leq i\leq \sum_{\beta\in \alpha+1}p_{\beta}-1$ and, on the other hand, the element $a_{Y}$ of $\prod_{i\in\lvert w_{Y} \rvert}A_{w_{Y}(i)}$ as the mapping from $\lvert w_{Y} \rvert$ into $\bigcup_{i\in \lvert w_{Y} \rvert}A_{w_{Y}(i)}$ such that, for every $i\in \lvert w_{Y} \rvert$ and $\alpha\in m$, $a_{Y}(i) = y_{\alpha,i-\sum_{\beta\in \alpha}p_{\beta}}$ if, and only if, $\sum_{\beta\in \alpha}p_{\beta}\leq i\leq \sum_{\beta\in \alpha+1}p_{\beta}-1$.  From these definitions follow (1), (2) and (3) above. Let us observe that (1) is a particular case of the fact that the mapping $M$ from $\bigcup_{w\in\fmon{S}}(\{w\}\times \prod_{i\in\lvert w \rvert}A_{w(i)})$ into $\mathrm{Sub}_{\text{fin}}(A)$ that to a pair $(w,a)$ assigns $M^{w,a}$ is surjective.

From the above and the definition of $F_{Y,b}$ we can affirm that $F_{Y,b}(a_{Y}) = b$, hence $b\in \mathrm{Sg}_{\mathbf{A}}(X)_{s}$.  Therefore $J(X)\incl\mathrm{Sg}_{\mathbf{A}}(X)$.

Finally, we prove that, for every $X\subseteq A$, $\mathrm{Sg}_{\mathbf{A}}(X)\incl J(X)$.  But for this, by Proposition~\ref{Sg}, it is enough to prove that, for every subset $X$ of $A$, we have that $\mathrm{E}_{\mathbf{A}}(X)\incl J(X)$.  Let $s\in S$ be and $c\in\mathrm{E}_{\mathbf{A}}(X)_{s}$. If $c\in X_{s}$, then $c\in J(X)_{s}$, because $J$ is extensive. If $c\nin X_{s}$, then, by the definition of $\mathrm{E}_{\mathbf{A}}(X)$, there exists a word $w\in\fmon{S}$, a many-sorted formal operation $(Y,b)\in\Sigma_{w,s}$ and an $a\in \prod_{i\in \lvert w \rvert}X_{w(i)}$ such that $F_{Y,b}(a) = c$. If $M^{w,a} = Y$, then $c = b$, hence $c\in J(Y)_{s}$, therefore, because $M^{w,a}\subseteq X$, $c\in J(X)_{s}$.  If $M^{w,a}\neq Y$, then $F_{Y,b}(a)\in J(M^{w,a})_{s}$, but, because $M^{w,a}\incl X$ and $J$ is isotone, $J(M^{w,a})$ is a subset of $J(X)$, hence $F_{Y,b}(a)\in J(X)_{s}$. Therefore $\mathrm{E}_{\mathbf{A}}(X)\incl J(X)$.
\end{proof}

The just stated theorem together with Proposition~\ref{Unif} entails the following corollary.

\begin{corollary}
Let $J$ be an algebraic many-sorted closure operator on an $S$-sorted set $A$. Then $J = \mathrm{Sg}_{\mathbf{A}}$ for some $S$-sorted signature $\Sigma$ and some $\Sigma$-algebra $\mathbf{A}$ if, and only if, $J$ is uniform.
\end{corollary}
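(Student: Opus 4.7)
The plan is to observe that this corollary is essentially a packaging of the two results immediately preceding it, namely Theorem~\ref{ThRep} and Proposition~\ref{Unif}, and that the hypothesis (algebraicity of $J$) exactly bridges them.

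For the implication from right to left, I would appeal directly to Theorem~\ref{ThRep}: since $J$ is algebraic by assumption and uniform by hypothesis, that theorem produces an $S$-sorted signature $\Sigma$ and a $\Sigma$-algebra $\mathbf{A}$ with $J = \mathrm{Sg}_{\mathbf{A}}$.

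For the implication from left to right, I would assume $J = \mathrm{Sg}_{\mathbf{A}}$ for some $\Sigma$-algebra $\mathbf{A}$ and invoke Proposition~\ref{Unif}, whose conclusion is precisely that $\mathrm{Sg}_{\mathbf{A}}$ is uniform; hence $J$ is uniform.

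There is no substantive obstacle: both directions are one-line applications of cited results, and the algebraicity hypothesis on $J$ is exactly what is needed to feed into Theorem~\ref{ThRep} (and is automatic on the other side, since $\mathrm{Sg}_{\mathbf{A}}$ is algebraic by Proposition~\ref{Alg}, though this is not even needed here because the statement assumes $J$ is algebraic from the outset). The only ``care'' required is to state the logical equivalence cleanly, noting that the algebraicity of $J$ is a standing hypothesis, so the biconditional is really between ``representable as $\mathrm{Sg}_{\mathbf{A}}$'' and ``uniform'' within the class of algebraic many-sorted closure operators.
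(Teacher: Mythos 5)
Your proof is correct and follows exactly the paper's route: the paper derives this corollary by combining Theorem~\ref{ThRep} (for the ``uniform implies representable'' direction, using the standing algebraicity hypothesis) with Proposition~\ref{Unif} (for the converse). Nothing further is needed.
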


We next prove that for a natural number $n$, an $S$-sorted signature $\Sigma$, and a $\Sigma$-algebra $\mathbf{A}$, under a suitable condition on $\Sigma$ related to $n$, the uniform algebraic many-sorted closure operator $\mathrm{Sg}_{\mathbf{A}}$ is an $n$-ary many-sorted closure operator on $A$.

\begin{proposition}\label{Arity}
Let $\Sigma$ be an $S$-sorted signature, $\mathbf{A}$ a $\Sigma$-algebra, and $n\in \mathbb{N}$. If $\Sigma$ is such that, for every $(w,s)\in S^{\star}\times S$, $\Sigma_{w,s} = \varnothing$ if $\bb{w}> n$---in which case we will say that every operation of $\mathbf{A}$ is of an arity $\leq n$---, then the uniform algebraic many-sorted closure operator $\mathrm{Sg}_{\mathbf{A}}$ is an $n$-ary many-sorted closure operator on $A$, i.e., $\mathrm{Sg}_{\mathbf{A}} = (\mathrm{Sg}_{\mathbf{A}})_{\leq n}^{\omega}$.
\end{proposition}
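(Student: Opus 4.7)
The plan is to apply the fixed-point characterization of $n$-ary many-sorted closure operators established earlier: $\mathrm{Sg}_{\mathbf{A}}$ is $n$-ary if, and only if, every $X\subseteq A$ satisfying $\mathrm{Sg}_{\mathbf{A}}(Z)\subseteq X$ for all $Z\in\mathrm{Sub}_{\leq n}(X)$ is a fixed point of $\mathrm{Sg}_{\mathbf{A}}$. Since fixed points of $\mathrm{Sg}_{\mathbf{A}}$ are precisely the subalgebras of $\mathbf{A}$ (by Definition~\ref{Subalg} and the definition of $\mathrm{Sg}_{\mathbf{A}}$), the task reduces to showing that any such $X$ is closed under every operation $F_\sigma$ of $\mathbf{A}$.

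So I would fix an $X\subseteq A$ with the stated property and take an arbitrary formal operation $\sigma\in\Sigma_{w,s}$ together with an element $a\in X_{w}=\prod_{i\in\lvert w\rvert}X_{w(i)}$; the goal is to prove $F_{\sigma}(a)\in X_{s}$. By hypothesis on $\Sigma$ we have $\lvert w\rvert\leq n$. From $a$ I would extract its ``support'' as an $S$-sorted subset $Z\subseteq X$ by setting $Z_{t}=\{a_{i}\mid i\in w^{-1}[t]\}$ for every $t\in S$. Then $a\in Z_{w}$, and
\[
\textstyle\mathrm{card}(Z)=\sum_{t\in S}\mathrm{card}(Z_{t})\leq\sum_{t\in S}\mathrm{card}(w^{-1}[t])=\lvert w\rvert\leq n,
\]
so $Z\in\mathrm{Sub}_{\leq n}(X)$.

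Next I would invoke the fact that $\mathrm{Sg}_{\mathbf{A}}(Z)$ is a subalgebra of $\mathbf{A}$ containing $Z$. Since $a\in Z_{w}\subseteq\mathrm{Sg}_{\mathbf{A}}(Z)_{w}$ and $\mathrm{Sg}_{\mathbf{A}}(Z)$ is closed under $F_{\sigma}$, one gets $F_{\sigma}(a)\in\mathrm{Sg}_{\mathbf{A}}(Z)_{s}$. By the assumption on $X$, $\mathrm{Sg}_{\mathbf{A}}(Z)\subseteq X$, and therefore $F_{\sigma}(a)\in X_{s}$. Hence $X$ is closed under all operations of $\mathbf{A}$, i.e., $X$ is a subalgebra, which means $\mathrm{Sg}_{\mathbf{A}}(X)=X$.

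This plan is essentially mechanical once the fixed-point characterization is available; there is no genuine obstacle. The only mild subtlety worth stating carefully is the cardinality bound on $Z$: one has to recall that ``$\mathrm{card}(Z)\leq n$'' means the cardinal of the coproduct $\coprod Z$ is $\leq n$, and to verify the inequality $\sum_{t\in S}\mathrm{card}(Z_{t})\leq\lvert w\rvert$ via the fibres $w^{-1}[t]$. Once that is observed, the uniform algebraic character of $\mathrm{Sg}_{\mathbf{A}}$ (Propositions~\ref{Alg} and~\ref{Unif}) together with the structural hypothesis $\Sigma_{w,s}=\varnothing$ for $\lvert w\rvert>n$ forces the closure operator $\mathrm{Sg}_{\mathbf{A}}$ to be $n$-ary.
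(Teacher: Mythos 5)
Your argument is correct, but it follows a different route from the paper's. The paper's (sketched) proof works through the constructive description of Proposition~\ref{Sg}: it uses the identity $\mathrm{Sg}_{\mathbf{A}}=\mathrm{E}_{\mathbf{A}}^{\omega}$ together with the sandwich $\mathrm{E}_{\mathbf{A}}(X)\subseteq(\mathrm{Sg}_{\mathbf{A}})_{\leq n}(X)\subseteq\mathrm{Sg}_{\mathbf{A}}(X)$, which is then iterated to squeeze $\mathrm{Sg}_{\mathbf{A}}$ between $\mathrm{E}_{\mathbf{A}}^{\omega}$ and $(\mathrm{Sg}_{\mathbf{A}})_{\leq n}^{\omega}$; note that the first inclusion of that sandwich actually fails when $n=0$ (since $(\mathrm{Sg}_{\mathbf{A}})_{\leq 0}(X)=\mathrm{Sg}_{\mathbf{A}}(\varnothing^{S})$ need not contain $X$), which is why the paper advises treating $n=0$ separately. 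You instead invoke the fixed-point characterization of $n$-ary operators proved earlier and reduce everything to showing that any $X$ containing $\mathrm{Sg}_{\mathbf{A}}(Z)$ for all $Z\in\mathrm{Sub}_{\leq n}(X)$ is a subalgebra; the support construction $Z_{t}=\{a_{i}\mid i\in w^{-1}[t]\}$ with $\mathrm{card}(Z)\leq\lvert w\rvert\leq n$ is exactly the right device, and your verification of the cardinality bound via the fibres $w^{-1}[t]$ is sound. What your approach buys is uniformity: no case split on $n$ is needed, and the argument is arguably more conceptual (fixed points of $\mathrm{Sg}_{\mathbf{A}}$ are the subalgebras, so $n$-arity reduces to closure under operations of arity $\leq n$). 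What the paper's approach buys is independence from the fixed-point proposition and a quantitative correspondence between the stages $\mathrm{E}_{\mathbf{A}}^{m}$ and $(\mathrm{Sg}_{\mathbf{A}})_{\leq n}^{m}$. One cosmetic remark: your closing sentence appeals to uniformity and algebraicity (Propositions~\ref{Alg} and~\ref{Unif}), but your argument never actually uses either property; only extensivity, isotonicity, and the fact that $\mathrm{Sg}_{\mathbf{A}}(Z)$ is a subalgebra are needed.
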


\begin{proof}
It follows from $\Sg_{\mathbf{A}}(X) = \E_{\mathbf{A}}^{\omega}(X)$ and from the fact that, for every $X\subseteq A$,  $\E_{\mathbf{A}}(X)\subseteq (\mathrm{Sg}_{\mathbf{A}})_{\leq n}(X)\subseteq \mathrm{Sg}_{\mathbf{A}}(X)$. The details are left to the reader. However, we notice that it is advisable to split the proof into two cases, one for $n = 0$ and another one for $n\geq 1$.
\end{proof}

\begin{proposition}
Let $A$ be an $S$-sorted set, $J$ a many-sorted closure operator on $A$, and $n\in \mathbb{N}$. If $J$ is $n$-ary (hence, by Proposition \ref{$n$-aryalg}, algebraic) and uniform, then there exists an $S$-sorted signature $\Sigma'$ and a $\Sigma'$-algebra $\mathbf{A}'$ such that $J = \mathrm{Sg}_{\mathbf{A}'}$ and every operation of $\mathbf{A}'$ is of an arity $\leq n$.
\end{proposition}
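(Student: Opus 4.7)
The plan is to mimic the construction in the proof of Theorem~\ref{ThRep}, but to truncate the signature so that only ``witnessing'' operations for subsets of size at most $n$ are retained. Since $J$ is $n$-ary, Proposition~\ref{$n$-aryalg} guarantees that $J$ is algebraic, and by hypothesis $J$ is uniform, so all the machinery of Theorem~\ref{ThRep} is available.

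First, I would define the $S$-sorted signature $\Sigma'$ by setting, for every $(w,s)\in \fmon{S}\bprod S$,
\[
\Sigma'_{w,s} \;=\;
\begin{cases}
\bigl\{(X,b)\in\textstyle\bigcup_{X\in\mathrm{Sub}(A)}(\{X\}\bprod J(X)_{s})\bigm| \forall\, t\in S,\ \mathrm{card}(X_{t})=\lvert w\rvert_{t}\bigr\}, & \text{if } \lvert w\rvert\leq n,\\[2pt]
\varnothing, & \text{if } \lvert w\rvert> n.
\end{cases}
\]
Note that, as in Theorem~\ref{ThRep}, membership $(X,b)\in\Sigma'_{w,s}$ forces $\mathrm{card}(X)=\lvert w\rvert\leq n$, so every operation of the resulting algebra will have arity $\leq n$, as required. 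The operations $F_{X,b}\colon A_{w}\mor A_{s}$ for $(X,b)\in\Sigma'_{w,s}$ are defined verbatim as in the proof of Theorem~\ref{ThRep}: pick a choice function $f\in \prod_{(Y,s)\in\Lambda}Y_{s}$ once and for all, and set $F_{X,b}(a)=b$ if $M^{w,a}=X$, and $F_{X,b}(a)=f(J(M^{w,a}),s)$ otherwise. Soundness of this definition (i.e.\ $s\in\mathrm{supp}_{S}(J(M^{w,a}))$) rests only on the uniformity of $J$ and is established exactly as in Theorem~\ref{ThRep}. Call the resulting $\Sigma'$-algebra $\mathbf{A}'$.

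The inclusion $\mathrm{Sg}_{\mathbf{A}'}(X)\subseteq J(X)$ is obtained by the same argument as in Theorem~\ref{ThRep}: it suffices, by Proposition~\ref{Sg}, to check $\mathrm{E}_{\mathbf{A}'}(X)\subseteq J(X)$, and the verification is unchanged because each operation of $\mathbf{A}'$ is one of the operations constructed in Theorem~\ref{ThRep}. The nontrivial direction is $J(X)\subseteq \mathrm{Sg}_{\mathbf{A}'}(X)$, and this is where $n$-arity enters. Since $J=J^{\omega}_{\leq n}$, it is enough to prove by induction on $m\in\mathbb{N}$ that $J^{m}_{\leq n}(X)\subseteq \mathrm{Sg}_{\mathbf{A}'}(X)$ for every $X\subseteq A$. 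The case $m=0$ is immediate from extensivity of $\mathrm{Sg}_{\mathbf{A}'}$. For the inductive step, assuming $J^{k}_{\leq n}(X)\subseteq \mathrm{Sg}_{\mathbf{A}'}(X)$, take $Y\in\mathrm{Sub}_{\leq n}(J^{k}_{\leq n}(X))$ and $b\in J(Y)_{s}$; enumerating the (finitely many) nonempty sorts of $Y$ and its coordinates, construct, exactly as in the proof of Theorem~\ref{ThRep}, a word $w_{Y}$ and a tuple $a_{Y}\in\prod_{i\in\lvert w_{Y}\rvert}Y_{w_{Y}(i)}$ with $M^{w_{Y},a_{Y}}=Y$ and $(Y,b)\in\Sigma_{w_{Y},s}$. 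The point is that $\lvert w_{Y}\rvert=\mathrm{card}(Y)\leq n$, so in fact $(Y,b)\in\Sigma'_{w_{Y},s}$, and hence $F_{Y,b}$ is an operation of $\mathbf{A}'$. Because $Y\subseteq J^{k}_{\leq n}(X)\subseteq \mathrm{Sg}_{\mathbf{A}'}(X)$ and $\mathrm{Sg}_{\mathbf{A}'}(X)$ is a subalgebra of $\mathbf{A}'$, the entries of $a_{Y}$ lie in $\mathrm{Sg}_{\mathbf{A}'}(X)$, so $b=F_{Y,b}(a_{Y})\in\mathrm{Sg}_{\mathbf{A}'}(X)_{s}$, completing the induction.

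I expect the main obstacle to be the bookkeeping in the inductive step: one must verify that truncating $\Sigma$ at arity $n$ does not destroy the ability to ``witness'' arbitrary elements of $J(X)$, and the $n$-ary hypothesis is precisely what licenses restricting attention to subsets $Y$ of cardinality at most $n$. Once that point is isolated, the rest of the argument is a transcription of Theorem~\ref{ThRep} together with the fact that $\mathrm{card}(X)=\lvert w\rvert$ for any $(X,b)\in\Sigma'_{w,s}$, which guarantees that every operation of $\mathbf{A}'$ has arity $\leq n$.
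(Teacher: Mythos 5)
Your proposal is correct and follows essentially the same route as the paper: the paper's own proof likewise takes the signature $\Sigma$ and algebra $\mathbf{A}$ from Theorem~\ref{ThRep}, truncates $\Sigma$ at arity $n$ (setting $\Sigma'_{w,s}=\varnothing$ for $\lvert w\rvert>n$), restricts $F$ along the inclusion $\Sigma'\hookrightarrow\Sigma$, and asserts that $J=\mathrm{Sg}_{\mathbf{A}'}$. The paper leaves that last verification to the reader, whereas you supply it via the induction on $J^{m}_{\leq n}$ using the key observation that $(X,b)\in\Sigma_{w,s}$ forces $\mathrm{card}(X)=\lvert w\rvert$; this is exactly the intended argument.
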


\begin{proof}
If we denote by $\mathbf{A} = (A,F)$ the $\Sigma$-algebra associated to $J$ constructed in the proof of Theorem~\ref{ThRep}, then taking as $\Sigma'$ the $S$-sorted signature defined, for every $(w,s)\in S^{\star}\times S$, as: $\Sigma'_{w,s} = \Sigma_{w,s}$, if $\bb{w}\leq n$; and $\Sigma'_{w,s} = \varnothing$, if $\bb{w} > n$, and as $\mathbf{A}' = (A',F')$ the $\Sigma'$-algebra defined as: $A' = A$, and $F' = F\circ \mathrm{inc}^{\Sigma',\Sigma}$, where $\mathrm{inc}^{\Sigma',\Sigma} = (\mathrm{inc}^{\Sigma',\Sigma}_{w,s})_{(w,s)\in S^{\star}\times S}$ is the canonical inclusion of $\Sigma'$ into $\Sigma$, then one can show that $J = \mathrm{Sg}_{\mathbf{A}'}$.
\end{proof}

From the just stated proposition together with Proposition~\ref{Arity} it follows immediately the following corollary, which is an  algebraic characterization of the $n$-ary and uniform many-sorted closure operators.

\begin{corollary}
Let $J$ be a many-sorted closure operator on an $S$-sorted set $A$ and $n\in \mathbb{N}$. Then $J$ is $n$-ary and uniform if, and only if, there exists an  $S$-sorted signature $\Sigma$ and a $\Sigma$-algebra $\mathbf{A}$ such that $J = \mathrm{Sg}_{\mathbf{A}}$ and every operation of $\mathbf{A}$ is of an arity $\leq n$.
\end{corollary}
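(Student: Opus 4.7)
The plan is to derive the corollary by assembling the three results already in hand: the preceding proposition (giving the ``only if'' direction), Proposition~\ref{Arity} (giving the $n$-arity half of the ``if'' direction), and Proposition~\ref{Unif} (giving the uniformity half of the ``if'' direction). Since both implications have essentially been done, the proof is a short two-paragraph argument, one per direction.

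For the direct implication, assume $J$ is $n$-ary and uniform. By Proposition~\ref{$n$-aryalg}, $J$ is algebraic, so the hypotheses of the preceding proposition are satisfied, and it yields an $S$-sorted signature $\Sigma$ and a $\Sigma$-algebra $\mathbf{A}$ such that $J = \mathrm{Sg}_{\mathbf{A}}$ and every operation of $\mathbf{A}$ is of an arity $\leq n$. I would just cite that proposition directly.

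For the converse, suppose there exist $\Sigma$ and a $\Sigma$-algebra $\mathbf{A}$ with $J = \mathrm{Sg}_{\mathbf{A}}$ such that every operation of $\mathbf{A}$ is of an arity $\leq n$. By Proposition~\ref{Arity}, under this arity bound $\mathrm{Sg}_{\mathbf{A}}$ is an $n$-ary many-sorted closure operator on $A$, so $J$ is $n$-ary. By Proposition~\ref{Unif}, $\mathrm{Sg}_{\mathbf{A}}$ is uniform, so $J$ is uniform. Combining these two facts gives the desired conclusion.

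There is no genuine obstacle: every ingredient has been established in the excerpt, and the argument is a one-line invocation in each direction. The only thing worth flagging in the write-up is that, in the direct implication, one should explicitly appeal to Proposition~\ref{$n$-aryalg} to justify that the preceding proposition applies (since that proposition was phrased under the assumption of $n$-arity, but internally uses algebraicity through Theorem~\ref{ThRep}). No new construction, no new estimate, and no new induction is required.
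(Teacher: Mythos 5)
Your proposal is correct and follows exactly the paper's route: the paper derives the corollary ``immediately'' from the preceding proposition (the ``only if'' direction) together with Proposition~\ref{Arity} and Proposition~\ref{Unif} (the ``if'' direction). Your extra remark about invoking Proposition~\ref{$n$-aryalg} is harmless but already absorbed into the statement of the preceding proposition, whose hypothesis is just ``$n$-ary and uniform.''
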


\section{The irredundant basis theorem for many-sorted closure spaces.}

We next show Tarski's irredundant basis theorem for many-sorted closure spaces.

\begin{theorem}[Tarski's irredundant basis theorem for many-sorted closure spaces]
Let $(A,J)$ be a  many-sorted closure space. If $J$ is an $n$-ary many-sorted operator on the $S$-sorted set $A$, with $n\geq 2$, and if $i<j$ with $i,j\in \mathrm{IrB}_{J}(A)$ such that
$$
 \{i+1,\ldots,j-1\}\cap \mathrm{IrB}_{J}(A) = \vacio,
$$
then $j-i\leq n-1$. In particular, if $n = 2$, then $\mathrm{IrB}_{J}(A)$ is a convex subset of $\mathbb{N}$.
\end{theorem}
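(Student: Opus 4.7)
The plan is to adapt Tarski's proof in the single-sorted case (as presented by Burris and Sankappanavar) to the many-sorted setting, by means of a sequential exchange argument. Assume the hypotheses of the theorem and, toward a contradiction, that $j - i \geq n$. Fix irredundant bases $U$ with $\mathrm{card}(U) = i$ and $V$ with $\mathrm{card}(V) = j$. Since an irredundant basis coincides with a minimal basis, two irredundant bases comparable under inclusion must be equal; in particular there exist $s \in S$ and $u \in U_{s} \setminus V_{s}$. The goal is to exhibit an irredundant basis $W$ with $i < \mathrm{card}(W) < j$, contradicting the gap hypothesis.

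Construct recursively a sequence of irredundant bases $V = W_{0}, W_{1}, \ldots, W_{\ell}$ by an exchange procedure: at stage $k$, if $U \subseteq W_{k}$, stop; otherwise pick $s_{k} \in S$ and $u_{k} \in U_{s_{k}} \setminus (W_{k})_{s_{k}}$, adjoin $\delta^{s_{k}, u_{k}}$ to form the basis $W_{k} \cup \delta^{s_{k}, u_{k}}$, and contract it to an irredundant basis $W_{k+1}$ containing the ``committed'' set $(U \cap W_{k}) \cup \delta^{s_{k}, u_{k}}$, by iteratively discarding $J$-redundant elements lying outside this committed set. Algebraicity of $J$ (Proposition~\ref{$n$-aryalg}) ensures termination of each contraction. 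By construction $U \cap W_{k}$ strictly grows at each stage, so the procedure halts after finitely many stages with $U \subseteq W_{\ell}$; as no irredundant basis properly contains another, $W_{\ell} = U$ and $\mathrm{card}(W_{\ell}) = i$.

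The quantitative heart of the proof is the step-size bound
$$
 \mathrm{card}(W_{k}) - \mathrm{card}(W_{k+1}) \leq n - 2,
$$
which, setting $c_{k} = \mathrm{card}(W_{k})$, reads $c_{k+1} \geq c_{k} - n + 2$. Since each $c_{k}$ belongs to $\mathrm{IrB}_{J}(A)$, the gap hypothesis forces $c_{k} \leq i$ or $c_{k} \geq j$. If $c_{k} \geq j$, then $c_{k+1} \geq j - n + 2 \geq i + 2 > i$ (using $j - i \geq n$), so by the dichotomy $c_{k+1} \geq j$. Starting from $c_{0} = j$, induction yields $c_{k} \geq j$ for every $k$, contradicting $c_{\ell} = i < j$. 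The case $n = 2$ is included as a special case, since then the hypothesis $j - i \leq n - 1 = 1$ for consecutive elements of $\mathrm{IrB}_{J}(A)$ means precisely that $\mathrm{IrB}_{J}(A)$ is convex.

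The main obstacle is proving the step-size bound, i.e., showing that at most $n - 1$ elements of $W_{k}$ are discarded while contracting $W_{k} \cup \delta^{s_{k}, u_{k}}$ to $W_{k+1}$. This is precisely where the $n$-arity of $J$ is used essentially, through the characterization (proved above) that a subset $X$ of $A$ is $J$-closed if and only if $J(Z) \subseteq X$ for every $Z \in \mathrm{Sub}_{\leq n}(X)$. Combined with the irredundancy of $W_{k}$---no $x \in (W_{k})_{t}$ belongs to $J(W_{k} \setminus \delta^{t, x})_{t}$---one argues that each discarded element of $W_{k}$ must be ``witnessed'' through $u_{k}$ together with at most $n - 1$ further elements, which bounds the total number of discards by $n - 1$. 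The many-sorted subtleties, namely sort-tracking when removing single elements via $W \setminus \delta^{t, x}$ and when applying the $n$-ary characterization coordinate-wise, are handled throughout by the delta of Kronecker notation.
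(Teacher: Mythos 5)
Your overall architecture (assume $j-i\geq n$, walk from a $j$-element irredundant basis down to an $i$-element one, and derive a contradiction from a per-step cardinality bound) hinges entirely on the claimed step-size bound $\mathrm{card}(W_{k})-\mathrm{card}(W_{k+1})\leq n-2$, i.e.\ that at most $n-1$ elements of an irredundant basis $W_{k}$ can be discarded when $W_{k}\cup\delta^{s_{k},u_{k}}$ is contracted to an irredundant basis containing the committed set. This lemma is false, already in the single-sorted, $n=2$ case. Take $A=\{a,b,u\}$ with $J(\{u\})=J(\{a,b\})=A$, $J(\{a\})=\{a\}$, $J(\{b\})=\{b\}$, $J(\varnothing)=\varnothing$. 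This $J$ is a $2$-ary closure operator (it is determined by its values on subsets of size $\leq 2$), its irredundant bases are $\{a,b\}$ and $\{u\}$, and $\mathrm{IrB}=\{1,2\}$ is convex, consistent with the theorem. But the exchange step from $W_{0}=\{a,b\}$ with $u_{0}=u$ forces $W_{1}=\{u\}$: both $a$ and $b$ must be discarded, so two elements are discarded where your bound allows at most $n-1=1$, and the cardinality drops by $1$ where your bound allows a drop of at most $n-2=0$. The flaw in your justification is visible here: each discarded element is indeed witnessed by a set of size $\leq n$ through $u_{k}$ (both $a$ and $b$ lie in $J(\{u\})$), but distinct discarded elements may use distinct witness sets, so a per-element witness bound of $n-1$ does not bound the \emph{number} of discards. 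Since the quantitative heart of your argument fails, the proof does not go through.

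For comparison, the paper's proof (following Tarski as reformulated by Burris and Sankappanavar) does not exchange one element at a time from $V$ toward $U$. Instead it works with the stratification $A=\bigcup_{m}J^{m}_{\leq n}(Z)$ induced by the $n$-arity of $J$ on a fixed $j$-element irredundant basis $Z$, assigns to every irredundant basis $X$ with $\mathrm{card}(X)\leq i$ the least depth $d_{Z}(X)=\min\{k\geq 1\mid X\subseteq J^{k}_{\leq n}(Z)\}$, and makes a doubly extremal choice: first minimize $d_{Z}(X)=t+1$, then minimize the number of elements of $X$ in the top stratum $J^{t+1}_{\leq n}(Z)-J^{t}_{\leq n}(Z)$. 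Replacing a single top-stratum element $a_{0}$ by a witness set $F\subseteq J^{t}_{\leq n}(Z)$ with $\mathrm{card}(F)\leq n$ and contracting yields an irredundant basis $X^{2}$ with $\mathrm{card}(X^{2})<\mathrm{card}(X^{0})+n$; the gap hypothesis and the extremality of $X^{0}$ then force $\mathrm{card}(X^{0})+n>j$, whence $j-i<n$. The depth function and the second (top-stratum) minimization are exactly the ingredients your exchange scheme is missing; without them there is no correct local bound to propagate.
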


\begin{proof}
Let $Z\subseteq A$ be an irredundant basis with respect to $J$ such that $\mathrm{card}(Z) = j$ and $\mathcal{K} = \{\,X\in
\mathrm{IrB}_{J}(A)\mid \mathrm{card}(X)\leq i\,\}$. Since $J$ is $n$-ary, we can assert that $J(Z) = A = \bigcup_{m\in
\mathbb{N}}J^{m}_{\leq n}(Z)$, so, for every $s\in S$, $J(Z)_{s} = A_{s} = \bigcup_{m\in \mathbb{N}}J^{m}_{\leq n}(Z)_{s}$. Let $X$ be an element of $\mathcal{K}$. Then there exists a $k\in \mathbb{N}-1$ such that $X\subseteq J^{k}_{\leq n}(Z)$. The natural number $k$ should be strictly greater than $0$, because if $k = 0$, $X\subseteq J^{0}_{\leq n}(Z) = Z$, but $\mathrm{card}(X) = i<j = \mathrm{card}(Z)$, so $Z$ would not be an irredundant basis. So that, for every $X\in
\mathcal{K}$, $\{\,k\in \mathbb{N}-1\mid X\subseteq J^{k}_{\leq n}(Z)\,\}\neq \vacio$. Therefore, for every $X\in \mathcal{K}$,
we can choose the least element of such a set, denoted by $d_{Z}(X)$, and there is fulfilled that $d_{Z}(X)$ is greater than or equal to $1$. For $d_{Z}(X)-1$ we have that $X\nsubseteq J^{d_{Z}(X)-1}_{\leq n}(Z)$. So we conclude that there exists a mapping  $d_{Z}\colon\mathcal{K}\mor \mathbb{N}-1$ that to an $X\in \mathcal{K}$ assigns $d_{Z}(X)$. The image of the mapping $d_{Z}$, which is a nonempty part of $\mathbb{N}-1$, is well-ordered, hence it has a least element, which is, necessarily, non zero, $t+1$, therefore, since $\mathcal{K}/\mathrm{Ker}(d_{Z})$ is isomorphic to $\mathrm{Im}(d_{Z})$, by transport of structure, it will also be well-ordered, then we can always choose an $X\in \mathcal{K}$ such that, for every $Y\in \mathcal{K}$, $d_{Z}(X)\leq d_{Z}(Y)$,
e.g., an $X$ such that its equivalence class corresponds to the minimum $t+1$ of $\mathrm{Im}(d_{Z})$.  Moreover, among the $X$
which have the just mentioned property, we choose an $X^{0}$ such that, for every $Y\in \mathcal{K}$ with $Y\subseteq J^{t+1}_{\leq n}(Z)$, it happens that
$$
 \mathrm{card}(X^{0}\cap(J^{t+1}_{\leq n}(Z)-J^{t}_{\leq n}(Z)))\leq
 \mathrm{card}(Y\cap(J^{t+1}_{\leq n}(Z)-J^{t}_{\leq n}(Z))).
$$

By the method of election we have that $X^{0}\subseteq J^{t+1}_{\leq n}(Z)$ but $X^{0}\nsubseteq J^{t}_{\leq n}(Z)$. Of the latter we conclude that there exists an $s_{0}\in S$ such that $X^{0}_{s_{0}}\nsubseteq J^{t}_{\leq n}(Z)_{s_{0}}$, therefore
$$
(J^{t+1}_{\leq n}(Z)_{s_{0}}-J^{t}_{\leq n}(Z)_{s_{0}})\cap
X^{0}_{s_{0}}\neq \vacio.
$$
Let $a_{0}\in (J^{t+1}_{\leq n}(Z)_{s_{0}}-J^{t}_{\leq n}(Z)_{s_{0}})\cap X^{0}_{s_{0}}$ be. Then $a_{0}\in
X^{0}_{s_{0}}$, $a_{0}\in J^{t+1}_{\leq n}(Z)_{s_{0}}$ but $a_{0}\nin J^{t}_{\leq n}(Z)_{s_{0}}$. However, $J^{t+1}_{\leq
n}(Z) = J_{\leq n}(J^{t}_{\leq n}(Z))$, by definition, hence there exists a part $F$ of $J^{t}_{\leq n}(Z)$ such that $\mathrm{card}(F)\leq n$ and $a_{0}\in J(F)_{s_{0}}$. Let $X^{1}$ be the part of $A$ defined as follows:
\begin{equation*}
X^{1}_{s} =
  \begin{cases}
   X^{0}_{s}\cup F_{s}, & \text{if $s\neq s_{0}$;}\\
   (X^{0}_{s_{0}}-\{a_{0}\})\cup F_{s_{0}},
   & \text{if $s = s_{0}$.}
\end{cases}
\end{equation*}

It holds that $X^{0}\subseteq J(X^{1})$. Therefore $J(X^{0})\subseteq J(X^{1})$, but $J(X^{0}) = A$, hence $J(X^{1})
= A$, i.e., $X^{1}$ is a finite generator with respect to $J$, thus $X^{1}$ will contain a minimal generator $X^{2}$ with respect to $J$. It holds that $\mathrm{card}(X^{2})\leq \mathrm{card}(X^{1})<\mathrm{card}(X^{0})+n$. It cannot happen that
$\mathrm{card}(X^{0})+n\leq j$. Because if $\mathrm{card}(X^{0})+n\leq j$, then $\mathrm{card}(X^{2})<j$,
hence, since
$$
 \{i+1,\ldots,j-1\}\cap \mathrm{IrB}(A,J) = \vacio,
$$
$X^{2}\in \mathcal{K}$, but $X^{2}\subseteq J^{t+1}_{\leq n}(Z)$ and, moreover, it happens that
$$
\mathrm{card}(X^{2}\cap(J^{t+1}_{\leq n}(Z)-J^{t}_{\leq n}(Z))) <
\mathrm{card}(X^{0}\cap(J^{t+1}_{\leq n}(Z)-J^{t}_{\leq n}(Z))),
$$
because $a_{0}\nin X^{2}_{s_{0}}$ but $a_{0}\in X^{0}_{s_{0}}$, which contradicts the choice of $X^{0}$. Hence
$\mathrm{card}(X^{0})+n>j$. But $\mathrm{card}(X^{0})\leq i$, therefore $j-i<n$, i.e., $j-i\leq n-1$.
\end{proof}

%TambiÈn se cumple para las ·lgebras heterogÈneas de una signatura dada, que si las operaciones formales son tales que la longitud de su ariedad es a lo sumo $n$, entonces el operador de formaciÛn de la sub·lgebra generada es $n$-ario.

\end{document}